\newcommand{\ip}[2]{\left\langle #1\, , \,#2\right\rangle}
\newcommand{\cj}[1]{\overline{#1}}
\newcommand{\supp}{\operatorname*{supp}}
\newcommand{\bz}{\mathbb{Z}}
\newcommand{\br}{\mathbb{R}}
\newcommand{\bn}{\mathbb{N}}
\newtheorem{theorem}{Theorem}[section]
\newtheorem{proposition}[theorem]{Proposition}
\newtheorem{corollary}[theorem]{Corollary}
\newtheorem{lemma}[theorem]{Lemma}
\theoremstyle{definition}
\newtheorem{definition}[theorem]{Definition}
\newtheorem{remark}[theorem]{Remark}
\newtheorem{example}[theorem]{Example}
\theoremstyle{remark}
\theoremstyle{definition}
\renewcommand{\theclaim}{\textup{\theclaim}}
\newtheorem*{acknowledgements}{Acknowledgements}
\numberwithin{equation}{section}
\title[Frame Vector Group Representations and Amenability Properties]{Frame Vector Group Representations and Amenability Properties}
\author{Dorin Ervin Dutkay}
\address{[Dorin Ervin Dutkay] University of Central Florida\\
	Department of Mathematics\\
	4000 Central Florida Blvd.\\
	P.O. Box 161364\\
	Orlando, FL 32816-1364\\
U.S.A.\\} \email{Dorin.Dutkay@ucf.edu}
\author{Catalin Georgescu}
\address{[Catalin Georgescu] University of South Dakota\\
          Department of Mathematical Sciences\\
          414 E. Clark St. \\
          Vermillion, SD 57069\\
U.S.A. \\} \email{Catalin.Georgescu@usd.edu}
\author{Gabriel Picioroaga}
\address{[Gabriel Picioroaga] University of South Dakota\\
          Department of Mathematical Sciences\\
          414 E. Clark St. \\
          Vermillion, SD 57069\\
U.S.A. \\} \email{Gabriel.Picioroaga@usd.edu}
\keywords{Frames, group representations, amenable groups, Haagerup's property, property (T) }
\subjclass[2010]{20F65, 22D10, 42C15, 46C05}
\begin{document}

 \dedicatory{Dedicated to Professor David Larson, with friendship and appreciation.}

\begin{abstract}
We provide a new characterization of amenability for countable groups, based on frame representations admitting almost invariant vectors. By relaxing the frame inequalities, thereby weakening amenability, we obtain a large class of countable groups which we call {\it framenable}. We show that this class has some permanence properties, stands in contrast with property (T), and contains, for example, all free groups $\mathbb{F}_n$, $\textup{Aut}(\mathbb{F}_2)$ and $\textup{Aut}(\mathbb{F}_3)$, all (countable) lattices of $SL(2,\mathbb{R})$, the Baumslag–Solitar groups $BS_{p,q}$, the braid groups $B_n$, and Thompson’s group $F$. 
\end{abstract}

\maketitle

\tableofcontents

\section{Introduction}
Frame sequences originated in engineering as useful tools in signal processing and data compression. The theoretical study of frames in Hilbert spaces began with the seminal work of Duffin and Schaeffer \cite{DS}. The applications and practical aspects of frames have consistently been a focus of research. For example, properties such as robustness to erasures and straightforward implementation have been sought in both earlier and more recent constructions. To this end, the structured data underlying the frame is intermediary, not the main object of interest. Its role is to make the frame capable of processing signals (vectors in a Hilbert space) as effectively as possible. 

At the same time, a large body of theoretical work has been, and still is, dedicated to frames in abstract settings. For example, the subject has been connected to functional analysis and operator theory \cite{HL}. Group and $C^*$-algebra representations have been used extensively in Parseval frame and wavelet constructions, e.g., see \cite{DJ,DJ06,HL,BJK,DHJ,CDPW}. Frame constructions with respect to fractal measures began with \cite{JP}, and have generated sustained research, motivated in part by the (still) open question of whether the middle-third Cantor measure admits Fourier frames \cite{St}.

\par The present contribution belongs more to the area of classifying countable groups by their representation theory on Hilbert spaces. Our motivation and starting point are the following result of Han and Larson \cite{HL}: any Parseval frame representation of a countable discrete group $G$ is equivalent to a subrepresentation of the left-regular representation of the group. We build on this idea and show that in the presence of almost invariant vectors, together with weaker frame vectors than Parseval ones, such a group turns out to be amenable (see Theorem \ref{amen}). We note in passing that Theorem \ref{amen} vastly improves results obtained in \cite{GP}. 

While amenability is considered a {\it soft} property, strongly opposed to the {\it rigid} property (T), our characterization reveals a certain tension: a representation that weakly contains the trivial representation (i.e., admits almost invariant vectors) yet is separated from it, as reflected in the totality and Bessel inequalities satisfied by weak frame vectors. Nevertheless, amenability is not straightforward to check in general  by finding almost invariant vectors in the left regular representation. In the case of abelian (countable) groups, our characterization highlights amenability as a manifestation of the Fourier transform admitting almost invariant vectors together with a special orthonormal basis in the $L^2$-space of the Pontryagin dual. More generally, in Theorem \ref{thai1} we characterize the existence of almost invariant vectors for representations associated to regular Borel measures on the dual. We extend these observations to the case of amenable and ICC (infinite conjugacy classes) groups: any frame representation must admit almost invariant vectors (see Theorem \ref{thai3}).

\par As explained at the start of Section \ref{e}, there are many ways (and motivations) to weaken amenability. Our approach (see Section \ref{wfr}) is based on relaxing the frame vector requirements while retaining almost invariant vectors in a representation. The generalization, under which we call a group {\it framenable}, consists simply of choosing a weak frame of the form $\{\pi_g(w)\}_{g\in S}$ where $S\subset G$ is a countable set. In particular, if $S=G$ then $G$ is amenable. This new class of countable groups is quite large, as can be seen from the results and examples in Sections \ref{wfr} and \ref{ex}. In fact, the only examples we know so far that do not belong to this class are the countable property (T) groups. However, framenability does not share the same stability properties as amenability or its weaker variants; for instance, it does not pass to subgroups. We prove, however, that it passes up from subgroups of finite index to the parent group (see Theorem \ref{ind}). It is worth noting that the induced representation used in Theorem \ref{ind} yields an interesting dilation result in frame theory (see Theorem \ref{framext}; the precise definitions are included in that section): if a countable subgroup (finite index not required) has a frame representation, then one can dilate it to a frame representation of the parent group. 

Combining Theorem \ref{ind} with the observation that groups admitting surjective morphisms onto framenable groups are themselves framenable allows us to identify many further examples (see Section \ref{ex}). We end the last section with a list of questions and problems that we hope will clarify the position of the framenable class of groups within the class of non-property (T) groups.

\section{Amenability and Frames}\label{e}
The main result of this section, Theorem \ref{amen}, connects amenability with frame representations. Then we take a closer look at abelian and amenable ICC (infinite conjugacy classes) group representations which admit almost invariant vectors, see Theorems \ref{thai1} and \ref{thai3}.  
Amenability was first introduced by von Neumann. The absence of amenability for the free group with two generators helped explain the Banach Tarski paradox. The property is important in classification results in the study of $C^*$ and von Neumann algebras. In group theory, amenable groups can be viewed as the natural next step in tractability beyond abelian groups. This connection, however, is far from immediate—it emerges either through the Markov–Kakutani fixed point theorem or via Følner sequences together with the classification of abelian groups. For more details on group representations and amenability see for example \cite{BO,BHV}. For the abelian case see, e.g., \cite{Folland}. 
For more details on frames we refer the reader to \cite{HL} and \cite{Chris}. 

\medskip

{\bf Note.}  In what follows, the term {\it countable} refers to a one-to-one and onto correspondence with the set of integers, thus a countable set is an {\it infinite} set. The neutral element of group $G$ is denoted by $e$ or $\text{id}_G$ unless otherwise specified. The left regular representation of group $G$ is denoted by $\lambda$, whereas its trivial representation is denoted by $1_G$. Hilbert spaces will always be separable, with complex-valued inner product $\langle, \rangle$, which we sometimes label if we wish to distinguish between different Hilbert spaces within the same context.   

\begin{definition} Let $H$ be a separable Hilbert space and $S$ a countable set. 
	\begin{enumerate}
\item A set $\{f_n \}_{n \in S}\subset H$ is a {\it frame} if there exist $A,B>0$ such that for all  $v \in H$ : 
\begin{equation} \label{frame}
A \|v\|^2 \leqslant \sum_{n \in S} |\langle v, f_n\rangle |^2 \leqslant B \| v\|^2.
\end{equation}
\item A frame $\{f_n \}_{n \in S}$ is a {\it Parseval frame} if $A=B=1$ in \eqref{frame}.
\item  $\{f_n \}_{n \in S}$ is a {\it Bessel sequence} if the second inequality in \eqref{frame} holds for all $v\in H$.
\item $\{f_n \}_{n \in S}$ is a {\it weak frame} if it is Bessel and total (i.e., its linear span is dense in $H$).
\end{enumerate}
\end{definition}

Item (iv) above occurred multiple times in the literature albeit not with this particular name (see \cite{Web,Rief} and the references therein). We coined it ``weak frame'' because by the uniform boundedness principle, a sequence $\{f_n \}_{n \in S}$ satisfies (iv) if and only if  
$$ 0< \sum_{n\in S} |\langle f_n, v  \rangle| ^2 <\infty\text{ for all } v\neq 0 $$
Clearly  any frame is a weak frame, but the converse is not true in general. 
\begin{example}
Let $\{e_n\}_{n\in\mathbb{N}}$ be an orthonormal basis in $H$ and $\{a_n\}_{n\in\mathbb{N}}$ be any sequence of nonzero real numbers converging to zero. Define the sequence $\{f_n\}_n$ in $H$, $f_n=a_ne_n$ for every $n$. For arbitrary $v\in H$ we have
\[
\sum_{n} |\langle v, f_n \rangle |^2 = \sum_{n} |a_n|^2 |\langle v,e_n \rangle |^2 , 
\]
If the sum on the left was zero for some $v \neq 0$ in $H$, then $\langle v,e_n\rangle = 0$ for every $n$ which implies $v=0$ as $\{e_n\}_n$ is an ONB.  The Bessel property follows from the fact that $\{a_n\}$ is bounded. So $\{f_n\}_n$ is a weak frame.  It cannot be a frame since if there was $A>0$ such that 
\[
A \|v\|^2 \leqslant \sum_{n} |\langle v,f_n \rangle |^2, \text{ for all } v\in H
\] 
then taking $v=e_n$ we will get $A \leqslant |a_n|^2$ for any $n$, hence $A=0$.

The following example serves the same purpose:  for all $n\in\bn$ take $f_n:=e_n+e_{n+1}$. Then $\{f_n\mid n\geq 1\}$ is a Bessel sequence with dense linear span, hence a weak frame. However, $\{f_n\mid n\geq 1\}$ cannot be a frame, see  \cite[Example 5.4.6]{Chris}.
\end{example}
Recall from \cite{HL} that a Parseval frame is the image of an orthonormal basis under a projection, and that a frame is precisely the image of a Parseval frame under a bounded invertible operator. We do not need Riesz bases in this paper, we just mention (see \cite{HL}) that a Riesz basis is the image of an orthonormal basis under a bounded invertible operator. These bounded operators are constructed using the {\it frame transform}, also called the {\it analysis operator}. 
Given a countable set $S$ and a frame $\{f_n\}_{n\in S}$ one defines the {\it analysis operator} 
$\theta: H\to l^2(S)$ 
\[
\theta(v)(m):=\langle v , f_m \rangle \quad\text{ for all }v\in H, m\in S
\]
The adjoint $\theta^*$ is called {\it the synthesis operator} and $\theta^*\theta$ is called {\it the frame operator}. Inequalities \eqref{frame} show that $\theta$ is well-defined, bounded, and one-to-one. 
Denoting the standard orthonormal basis of the (infinite dimensional) space $l^2(S)$  by $\{\delta_n\}_{n\in S}$ we have
\[
\theta(v) =\sum_{n \in S} \langle v , f_n \rangle \delta_n \quad\text{ for all }v \in H.
\]

The frame inequalities \eqref{frame} also imply that the range of the frame transform is closed:
\[
\| \theta(x)-\theta(y)\|^2=\sum_{n \in S}| \langle x-y,f_n \rangle |^2 \geqslant A \| x-y \|^2,
\]
so if $\{\theta(x_n)\}$ is a convergent sequence in $\text{range}\,\theta$, we will have that $\{x_n\}_{n \in S}$ is a Cauchy sequence in $H$ and hence convergent to some $x$. Then $\theta(x_n)$ converges to $\theta(x)$, in the range of $\theta$. Accordingly $\theta:H \to \text{range}\,\theta$ is continuous and invertible and with continuous inverse from the bounded inverse theorem. Notice that a weak frame is precisely the image of an orthonormal basis under a bounded operator whose adjoint is injective. This follows from the observation that, given a weak frame, $\theta$ and $\theta^*$ are well-defined, bounded,  $\theta^*(\delta_n)=f_n$, for all $n\in S$, and $\theta$ is injective.

\begin{definition}\label{fvr}
Let $G$ be a countable discrete group, and $\pi: G\to \mathcal U(H)$ a unitary representation. 
A vector $\varphi\in H$ is :
\begin{enumerate}
\item  An {\it ONB vector} if $\{\pi_g(\varphi))\}_{g\in G}$ is an orthonormal basis.
\item  A {\it Parseval frame vector} if $\{\pi_g(\varphi)\}_{g\in G}$ is a Parseval frame.
\item A {\it frame vector} if $\{\pi_g(\varphi)\}_{g\in G}$ is a frame.
\item A {\it weak frame vector} if $\{\pi_g(\varphi)\}_{g\in G}$ is a weak frame. 
\end{enumerate}
In the case when a vector as in either (i)-(iv) exists, we call $\pi$ a {\it ONB/Parseval frame/frame/weak frame representation} of the group $G$. 
\end{definition}

 In the case of countable, discrete groups, amenability displays many equivalent definitions. We will add to the list more in Theorem \ref{amen}, which shows a somewhat surprising connection with frame vector representations.

\begin{definition}\label{camn} A countable discrete group $G$ is {\it amenable} if there exists a left-invariant state $\varphi : l^{\infty}(G)\to\mathbb{C}$ (i.e., $\varphi$ is a positive, linear functional, invariant under left-multiplication, and $\varphi(e)=1$ ). 
\end{definition}
\begin{definition} Let $G$ be a countable discrete group and $\rho : G\to\mathcal U(H_1)$, $\pi : G\to\mathcal U(H_2)$  two unitary representations of $G$. 
	We write $\rho<\pi$ if $\rho$ is equivalent to a subrepresentation of $\pi$. 
	
	 The representation $\rho$ is {\it weakly contained} in $\pi$, denoted by $\rho\prec \pi$, if 
for any $v\in H_1$ and  any $F\subset G$ finite, and for all $\epsilon > 0$,  there exist $w_i\in H_2$, $i=1,\dots,n$, such that  
$$| \langle \rho_g v, v  \rangle - \sum_{i=1}^n\langle  \pi_g w_i,w_i\rangle | < \epsilon,\quad  \text{ for all }g\in F $$

\end{definition}

One can easily see that weak-containment is a transitive relation. We say that the representations $\pi$ and $\rho$ are {\it weakly equivalent} if $\pi\prec \rho$ and $\rho\prec\pi$. 

In the case of particular representations, weak-containment can be spelled out more clearly. Let $G$ be a countable discrete group and $\pi$ a unitary representation of $G$ on a Hilbert space $H$. Denote by $1_G$ the trivial representation of $G$ on the same $H$. The following characterizations can be found in many textbooks, see e.g., \cite{BHV} and \cite{HV} :

(i) $1_G\prec\pi$ iff 
 there exists a net $(v_i)_{i\in I}$ in $H$, $\|v_i\|=1$ for all $i$ and
\begin{equation}\label{ainv}
\quad \|\pi_g v_i-v_i\| \to 0\text{ for all } g\in G.
\end{equation}
We note that if $G$ is finitely generated then $1_G\prec \pi$ iff \eqref{ainv} holds for any generator $g\in G$. 

Recall that  for a countable discrete group $G$ the left regular representation is $$\lambda : G\to \mathcal U( l^2(G)),\quad\lambda_g(\varphi)(h)=\varphi(g^{-1}h),\mbox{ for all $g, h $ in $G$.}$$
The following characterization is sometimes used as definition:

 (ii) $G$ is amenable iff  $1_G\prec\lambda$.

\begin{definition} We say that a unitary group representation $\pi$ has {\it almost invariant vectors} if there is a net of unit  vectors in $H$ such that \eqref{ainv} holds. 
\end{definition}

We will need the following ingredient, used often in \cite{HL}. We adjusted it to our setting as a lemma, with proof. 
\begin{lemma}\label{commutation}
Let  $\pi:G \to \mathcal U(H)$ be a unitary representation of a countable discrete group $G$ having a weak frame vector $v$ and let $\theta$ be its associated operator. Then, for any $g \in G$, the following diagram is commutative:

\begin{center}
\begin{xy}
(240,20)*+{H}="a"; (260,20)*+{l^2(G)}="b";%
(240,0)*+{H}="c"; (260,0)*+{l^2(G)}="d";%
{\ar "a";"b"}?*!/_2mm/{\theta};
{\ar "b";"d"} ?*!/_2mm/{\, \lambda_g};
{\ar "a";"c"} ?*!/^2mm/{\pi_g\,};
{\ar "c";"d"}?*!/_2mm/{\theta};  
\end{xy}
\end{center}
\end{lemma}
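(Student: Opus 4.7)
The plan is to unpack both compositions $\theta\circ\pi_g$ and $\lambda_g\circ\theta$ on an arbitrary vector $x\in H$, evaluate the resulting elements of $\ell^2(G)$ coordinate-by-coordinate at an arbitrary $h\in G$, and check that the two expressions agree. Since everything in sight is linear and bounded, pointwise agreement of the $\ell^2(G)$-sequences is enough.

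First I would recall from the setup preceding the lemma that the analysis operator associated with the frame vector $v$ is given explicitly by
\[
\theta(x)(h) = \langle x,\pi_h(v)\rangle\quad\text{for all }x\in H,\;h\in G.
\]
Then for fixed $g\in G$ and arbitrary $x\in H$, $h\in G$, I would compute
\[
\theta(\pi_g x)(h) = \langle \pi_g x,\pi_h(v)\rangle = \langle x,\pi_g^{*}\pi_h(v)\rangle = \langle x,\pi_{g^{-1}h}(v)\rangle,
\]
using that $\pi_g$ is unitary and $\pi$ is a homomorphism, so $\pi_g^*=\pi_{g^{-1}}$ and $\pi_{g^{-1}}\pi_h=\pi_{g^{-1}h}$. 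On the other hand, by the definition of the left regular representation,
\[
(\lambda_g\theta(x))(h) = \theta(x)(g^{-1}h) = \langle x,\pi_{g^{-1}h}(v)\rangle.
\]
The two sides coincide for every $h\in G$, so $\theta\pi_g=\lambda_g\theta$ as elements of $B(H,\ell^2(G))$.

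There is no real obstacle here: the content of the lemma is purely the intertwining computation, which is a direct consequence of the group-homomorphism and unitarity of $\pi$ together with the formulas for $\theta$ and $\lambda$. The only minor point worth noting is that although the lemma is stated for a frame vector, the same proof works verbatim for any weak frame vector, since the identity $\theta(\pi_g x)(h)=\langle x,\pi_{g^{-1}h}(v)\rangle$ does not use the lower frame bound.
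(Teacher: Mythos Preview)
Your proof is correct and follows essentially the same route as the paper: both compute $\theta(\pi_g x)$ and $\lambda_g\theta(x)$ directly and match them via the identity $\langle \pi_g x,\pi_h v\rangle=\langle x,\pi_{g^{-1}h}v\rangle$; the paper phrases this as a reindexing of the sum $\sum_h\langle\cdot,\pi_h v\rangle\delta_h$, while you evaluate coordinatewise at $h$, which is the same computation. Your closing remark that the argument does not use the lower frame bound is apt and is in fact exploited later in the paper.
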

\begin{proof}
If $\{\pi_h(v)\}_{h \in G}$ is a weak frame then $\theta$ is a well-defined, bounded operator.

\[
\theta(x)=\sum_{h \in G}\ip{x}{\pi_h(v)}\delta_h.
\]
We have:  

\begin{align*}
  \theta(\pi_g(x))&=\sum_{h \in G}\ip{\pi_g(x)}{\pi_h(v)}\delta_h= \sum_{h \in G}\ip{x}{\pi_{g^{-1}h}(v)}\delta_h=\\
  &=\sum_{g_1 \in G}\ip{x}{\pi_{g_1}(v)}\delta_{gg_1}=\lambda_g\left( \sum_{g_1 \in G}\ip{x}{\pi_{g_1}(v)}\delta_{g_1}\right)=\\
  &=\lambda_g(\theta(x)).
\end{align*}
\end{proof} 

\begin{theorem}\label{amen}Let $G$ be a countable discrete group. The following statements are equivalent:
	\begin{enumerate}
\item $G$ is amenable.
\item   There exists a representation $\pi$ of $G$ with ONB vector and $1_G\prec \pi $.
\item   There exists a representation $\pi$ of $G$ with Parseval frame vector and $1_G\prec \pi$.
\item   There exists a representation $\pi$ of $G$ with frame vector and $1_G\prec \pi$.
\item There exists a representation  $\pi:G \to \mathcal U(H)$ with weak frame vector and $1_G\prec\pi$.
\item  There exists a representation $\pi:G \to \mathcal U(H)$ with $1_G\prec\pi$ and  there exists $w\in H$ such that 
$\{\pi_g w\}_{g\in G}$ is total and 
$\sum_g |\langle v, \pi_g w \rangle|^2 <\infty$  for all $v$ in a dense subspace of $H$.
\end{enumerate}
\end{theorem}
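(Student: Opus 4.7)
The strategy is to close the cycle $(i) \Rightarrow (ii) \Rightarrow (iii) \Rightarrow (iv) \Rightarrow (v) \Rightarrow (vi) \Rightarrow (i)$. The chain of implications from (ii) to (vi) consists of successive weakenings of the frame condition: every ONB vector is a Parseval frame vector, every Parseval frame vector is a frame vector, every frame vector is a weak frame vector, and a weak frame vector satisfies the Bessel bound $\sum_{g\in G}|\langle v,\pi_g w\rangle|^2\le B\|v\|^2$ for \emph{every} $v\in H$, hence on any dense subspace. Each step is therefore immediate. For $(i)\Rightarrow (ii)$, amenability is the statement $1_G\prec\lambda$, and the vector $\delta_e\in l^2(G)$ is an ONB vector for $\lambda$ since $\lambda_g\delta_e=\delta_g$ runs through the standard basis of $l^2(G)$.

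The substantive content is $(vi)\Rightarrow(i)$. Suppose $\pi:G\to\mathcal U(H)$ satisfies $1_G\prec\pi$ and that $w\in H$ has $\{\pi_g w\}_{g\in G}$ total with $\sum_{g\in G}|\langle v,\pi_g w\rangle|^2<\infty$ for every $v$ in a dense subspace. I would first enlarge this good domain to
\[
D':=\Bigl\{v\in H:\sum_{g\in G}|\langle v,\pi_g w\rangle|^2<\infty\Bigr\},
\]
which is linear, still dense, and $\pi$-invariant (the change of variable $g\mapsto hg$ gives $\sum_g|\langle\pi_h v,\pi_g w\rangle|^2=\sum_g|\langle v,\pi_{h^{-1}g}w\rangle|^2$). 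Define
\[
\theta:D'\to l^2(G),\qquad \theta v:=\sum_{g\in G}\langle v,\pi_g w\rangle\,\delta_g.
\]
The computation of Lemma \ref{commutation} applies unchanged and gives $\theta\pi_g=\lambda_g\theta$ on $D'$; totality of $\{\pi_g w\}_{g\in G}$ makes $\theta$ injective. The operator $\theta$ is closable, since convergence in $l^2(G)$ is coordinatewise and each coordinate $(\theta v_n)(g)=\langle v_n,\pi_g w\rangle$ tends to $0$ as $v_n\to 0$. Denote the closure by $\bar\theta$; a limit argument shows $\bar\theta$ is still injective and still satisfies $\bar\theta\pi_g=\lambda_g\bar\theta$ on $D(\bar\theta)$. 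Taking adjoints, $\bar\theta^*\lambda_g=\pi_g\bar\theta^*$, and consequently $\pi_g$ commutes with $\bar\theta^*\bar\theta$, hence by functional calculus with the positive self-adjoint operator $|\bar\theta|=(\bar\theta^*\bar\theta)^{1/2}$.

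In the polar decomposition $\bar\theta=U|\bar\theta|$, injectivity of $\bar\theta$ forces $\ker|\bar\theta|=\{0\}$, so the partial isometry $U:H\to l^2(G)$ has initial space all of $H$ and is thus an isometry. The commutation $|\bar\theta|\pi_g=\pi_g|\bar\theta|$ combined with $\bar\theta\pi_g=\lambda_g\bar\theta$ gives $U\pi_g=\lambda_g U$ on the range of $|\bar\theta|$, which is dense in $H$, and therefore on all of $H$ by continuity. Thus $\pi$ is equivalent to a subrepresentation of $\lambda$; combined with $1_G\prec\pi$ and the transitivity of weak containment this gives $1_G\prec\lambda$, i.e.\ amenability of $G$. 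The delicate step I expect is the unbounded-operator bookkeeping: closability of $\theta$, the transfer of the intertwining relation to $\bar\theta$ and its polar factors via the spectral theorem, and the verification that $U$ intertwines on the entire Hilbert space. Under the stronger hypothesis (v) the analysis operator is automatically bounded with closed range, and the argument specializes to ordinary polar decomposition, recovering in spirit the Han--Larson observation that a frame representation embeds as a subrepresentation of $\lambda$.
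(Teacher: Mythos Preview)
Your argument is correct, and in fact the paper explicitly acknowledges the unbounded polar decomposition route you take as a valid proof of $(vi)\Rightarrow(i)$ (citing \cite[Theorem 7.20]{Wei}), but then deliberately presents a different argument via the Spectral Theorem for $\theta^*\theta$. In the paper's version, one takes the spectral projections $E(\Delta_n)$ with $\Delta_n=[1/n,n]$, verifies that $E(\Delta_n)w$ is an honest \emph{frame} vector for the subrepresentation $\pi|_{E(\Delta_n)}$ (with bounds $1/n^2$ and $n^2$), invokes Lemma~\ref{lemsub} to embed each $\pi|_{E(\Delta_n)}$ into $\lambda$, and then uses the auxiliary Lemma~\ref{lema} on increasing projections to conclude $\pi\prec\lambda$. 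Your approach is more direct and yields the stronger conclusion $\pi<\lambda$ (equivalence to a subrepresentation, not merely weak containment), at the cost of invoking polar decomposition for unbounded operators as a black box. The paper's approach trades that black box for Lemmas~\ref{lema} and~\ref{lemb1} and has the side benefit of exhibiting genuine frame vectors on the spectral subspaces. One minor remark: on the maximal domain $D'$ your operator $\theta$ is in fact already closed (not merely closable), by the same coordinatewise argument you give; the paper notes this, and it slightly streamlines the bookkeeping since $\bar\theta=\theta$.
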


Before proving the theorem above we will justify why items (i) through (iv) are all equivalent.  After this, we will close the circuit by showing (vi)$\Rightarrow$(i)  with the help of two lemmas.

 If $G$ is amenable then $1_G\prec \lambda$.  Also the vector $v=\delta_e$, where $e$ is the identity in $G$ is an ONB vector for $\lambda$. The implications (ii)$\Rightarrow$(iii)$\Rightarrow$(iv)$\Rightarrow$(v)$\Rightarrow$(vi) are all trivial.  We argue that (iv) implies the stronger containment $\pi < \lambda$, i.e., $\pi$ is a subrepresentation of the left regular representation. This appears in \cite{HL}, but we describe here the main ingredients. 
 
 \begin{lemma}\label{lemsub}
 	If a representation $\pi$ of $G$ has a frame vector then it has a Parseval frame vector and it is equivalent to a subrepresentation of the left-regular representation. 
 	\end{lemma}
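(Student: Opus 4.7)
My plan is to construct a Parseval frame vector $w$ by tightening the frame $\{\pi_g v\}_g$ via the inverse square root of its frame operator, and then to use the resulting isometric frame transform, together with Lemma \ref{commutation}, to exhibit $\pi$ as a subrepresentation of $\lambda$.

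Let $\theta:H\to l^2(G)$ be the frame transform of $v$ and set $T:=\theta^*\theta$. From the discussion preceding the lemma, $T$ is bounded, positive and (thanks to the lower frame bound, which gives $\ip{Tx}{x}=\|\theta x\|^2\geqslant A\|x\|^2$) invertible. The main step is to show that $T$ commutes with each $\pi_g$. Lemma \ref{commutation} provides the intertwining $\theta\pi_g=\lambda_g\theta$; taking adjoints and then replacing $g$ by $g^{-1}$ yields the dual intertwining $\pi_g\theta^*=\theta^*\lambda_g$, and composing the two gives $T\pi_g=\theta^*\lambda_g\theta=\pi_g T$. By the continuous functional calculus, $T^{-1/2}$ also commutes with every $\pi_g$.

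Now set $w:=T^{-1/2}v$, so that $\pi_g w=T^{-1/2}\pi_g v$. Using that $T^{-1/2}$ is self-adjoint, a short computation gives, for every $x\in H$,
\[
\sum_{g\in G}|\ip{x}{\pi_g w}|^2=\|\theta(T^{-1/2}x)\|^2=\ip{T\cdot T^{-1/2}x}{T^{-1/2}x}=\ip{T^{1/2}x}{T^{-1/2}x}=\|x\|^2,
\]
so $w$ is a Parseval frame vector for $\pi$. The associated frame transform $\theta_w$ is then an isometry, and Lemma \ref{commutation} applied to $w$ gives $\theta_w\pi_g=\lambda_g\theta_w$. Consequently $\theta_w(H)$ is a closed $\lambda$-invariant subspace of $l^2(G)$, and $\theta_w$ implements the desired unitary equivalence between $\pi$ and $\lambda|_{\theta_w(H)}$.

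The only nontrivial point is the passage from the intertwining of Lemma \ref{commutation} to the commutation $T\pi_g=\pi_g T$, which in turn allows $T^{-1/2}$ to be absorbed into the representation; after that, both the Parseval normalization and the subrepresentation conclusion follow immediately from the definitions and the isometric character of a Parseval frame transform.
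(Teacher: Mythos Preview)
Your proof is correct and follows essentially the same route as the paper: both arguments use the frame operator $S=\theta^*\theta$, observe via Lemma~\ref{commutation} and functional calculus that $S^{-1/2}$ commutes with $\pi$, conclude that $S^{-1/2}v$ is a Parseval frame vector, and then use the resulting isometric frame transform to realize $\pi$ as a subrepresentation of $\lambda$. The only cosmetic difference is that the paper cites \cite[Theorem~5.3.4]{Chr03} for the Parseval normalization, whereas you compute it directly.
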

 	\begin{proof}
  Notice that when $\pi$ has a Parseval frame vector $v$, the analysis operator is an isometry and $\theta(\pi_gv)=\theta\theta^*(\delta_g)$ for all $g\in G$. Since $\theta$ is an isometry, the operator $\theta\theta^*$ is the projection onto the range of $\theta$. Thus $\{\theta\pi_gv\}$ is a projection of an orthonormal basis, and therefore it is a Parseval frame for its span. Since $\theta$ intertwines $\pi$ and $\lambda$ (Lemma \ref{commutation}), it follows that the projection $\theta\theta^*$ commutes with $\lambda$ and so $\pi$ is equivalent to the subrepresentation of $\lambda$ on the range of the projection $\theta\theta^*$, and $\{\pi_g v\}$ is equivalent to the Parseval frame $\{\theta\theta^*\delta_g=\lambda_g\theta\theta^*\delta_e\}$.  See \cite[Theorem 3.11, Proposition 6.2]{HL}.
\\  
\par  In the case when $v$ is just a frame vector one can use \cite[Theorem 5.3.4]{Chr03}: 
\begin{theorem}\label{thchr} Let $\{f_k\}_k$ be a frame for $H$ with frame operator $S:=\theta^*\theta : H \to H$, where $\theta$ is the analysis operator. Denote the positive square root of $S^{-1}$ by $S^{-1/2}$. Then $\{S^{-1/2}f_k \}_k$ is a Parseval frame. 
\end{theorem}
 Now, if $\{\pi_g(w)\}_g$ is a frame then $\{S^{-1/2}\pi_g(w) \}_g$ is a Parseval frame.  From Lemma \ref{commutation} and functional calculus, $S^{-1/2}$ commutes with $\pi$, hence $S^{-1/2}w$ is a Parseval frame vector for $\pi$, and as above we get that $\pi$ is a subrepresentation of $\lambda$. 
\end{proof}

\par Returning to the proof of (iv)$\Rightarrow$(i), with Lemma \ref{lemsub}, we get $1_G\prec\pi<\lambda$ and so $G$ is amenable.
The next lemma shows that item (v) also implies the containment $1_G\prec \pi < \lambda$.
\newcommand{\Range}{\operatorname*{Range}}
\newcommand{\Ker}{\operatorname*{Ker}}
\begin{lemma}\label{lemaP1} Suppose $\pi$ has a weak frame vector $\psi\in H$. Then $\pi$ is equivalent to a subrepresentation of the left regular representation.
\end{lemma}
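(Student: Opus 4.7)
The plan is to mimic the Parseval-frame-vector strategy but replace any invertibility of the frame operator with a polar decomposition argument. First, I would set up the analysis operator $\theta : H \to l^2(G)$, $\theta(v) = \sum_{g \in G}\langle v, \pi_g\psi\rangle\, \delta_g$. The Bessel condition makes $\theta$ well-defined and bounded, while totality of $\{\pi_g\psi\}_{g\in G}$ forces $\Ker\theta = \{0\}$: if $\theta v = 0$, then $v$ is orthogonal to every $\pi_g\psi$ and hence to their dense linear span. The computation in Lemma \ref{commutation} uses only boundedness of $\theta$, so it still yields the intertwining identity $\theta\pi_g = \lambda_g\theta$ for all $g \in G$.

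Next, I would take the polar decomposition $\theta = U|\theta|$ with $|\theta| = (\theta^*\theta)^{1/2}$. Since $\Ker|\theta| = \Ker(\theta^*\theta) = \Ker\theta = \{0\}$, the partial isometry $U$ has initial space $\overline{\Range(|\theta|)} = (\Ker|\theta|)^{\perp} = H$, so $U : H \to l^2(G)$ is an honest isometry (not merely partial). Taking the adjoint of $\theta\pi_g = \lambda_g\theta$ gives $\theta^*\lambda_g = \pi_g \theta^*$, and combining the two relations yields $\theta^*\theta\,\pi_g = \pi_g\,\theta^*\theta$; continuous functional calculus then shows $|\theta|$ commutes with every $\pi_g$.

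Putting these together, $\lambda_g U|\theta| = \lambda_g\theta = \theta\pi_g = U|\theta|\pi_g = U\pi_g|\theta|$, so $\lambda_g U$ and $U\pi_g$ agree on the dense subspace $\Range(|\theta|)$, hence on all of $H$ by continuity. Therefore $U$ is an isometric intertwiner of $\pi$ with $\lambda$, and $\pi$ is equivalent to the subrepresentation of $\lambda$ on the closed, $\lambda$-invariant subspace $U(H) \subset l^2(G)$.

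The only real obstacle is technical rather than conceptual: when $\psi$ is merely a weak frame vector, $\theta$ need not have closed range, so one cannot rescale by $(\theta^*\theta)^{-1/2}$ as in the frame-vector case of Theorem \ref{thchr}. The polar decomposition sidesteps this because $\theta$ is still injective, which is exactly what promotes the partial isometry in $\theta = U|\theta|$ to a genuine isometry — even though $|\theta|$ itself need not be bounded below.
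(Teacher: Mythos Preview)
Your proposal is correct and follows essentially the same approach as the paper: set up the bounded, injective analysis operator $\theta$ intertwining $\pi$ and $\lambda$, take the polar decomposition $\theta=U|\theta|$, use injectivity to upgrade $U$ to an isometry, and pass the intertwining from $\theta$ to $U$ via the commutation of $|\theta|$ with $\pi$. Your closing remark on why polar decomposition succeeds where $(\theta^*\theta)^{-1/2}$ would fail is a helpful addition, but the core argument is identical to the paper's.
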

\begin{proof} We use the polar decomposition for $\theta$, the analysis operator. The operator $\theta$ is bounded, since the vector $\psi$ is a Bessel vector, and $\theta$ is injective, since $\psi$ is total. Also, according to Lemma \ref{commutation}, $\theta$ intertwines the representations $\pi$ on $H$ and $\lambda$ on $l^2(G)$. 
	
	We do the polar decomposition of $\theta$: $\theta=U|\theta|$ where $|\theta|=(\theta^*\theta)^{1/2}$ and $U$ is a partial isometry with initial space $\cj{\Range|\theta|}=\Ker|\theta|^\perp$; moreover, since $\theta$ is injective, so is $|\theta|$, and $U$ is an isometry. 
	
	We claim that $U$ intertwines $\pi$ and $\lambda$. We know that $\theta^*\theta$ commutes with $\pi$ and therefore $|\theta|=(\theta^*\theta)^{1/2}$ commutes with $\pi$ as well. 
	
	We have for $g\in G$ and $x\in H$,
	$$U\pi_g(|\theta|x)=U|\theta|\pi_gx=\theta\pi_gx=\pi_g\theta x=\pi_gU(|\theta|x).$$
	Since $\cj{\Range|\theta|}=\Ker|\theta|^\perp$ is the initial space of $U$ and this is the whole space $H$, we obtain that 
	$$U\pi_g=\pi_gU\mbox{ for all }g\in G.$$
	 Since $U$ is an isometry, it follows that $\pi$ is equivalent to a subrepresentation of $\lambda$.
\end{proof}

\par In the proof of the final step, we will see that (vi) implies the weak containments $1\prec \pi \prec \lambda $. To prove that (vi) implies (i), we can use again the polar decomposition, now for unbounded operators (\cite[Theorem 7.20]{Wei}). However, it is more instructive to go through a different approach that uses the Spectral Theorem. 	We will need some lemmas which are interesting on their own.
 	
 	\begin{lemma}\label{lema}
 		
 		Suppose $\pi$ and $\rho$ are unitary representations of the group $G$ on the Hilbert spaces $H$ and $K$ respectively. Suppose there exists an increasing sequence of projections $P_n$ in the commutant $\pi'$, $n\in\bn$ such that the subrepresentation $\pi|_{P_n}$ is weakly contained in $\rho$ for all $n\in\bn$, and that $\cj{\cup_nP_nH}=H$. Then $\pi\prec\rho$. 
 	\end{lemma}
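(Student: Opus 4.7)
The plan is to verify the definition of $\pi\prec\rho$ directly using the density hypothesis $\cj{\cup_n P_nH}=H$ together with the fact that each restriction $\pi|_{P_n}$ is already weakly contained in $\rho$.

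Fix $v\in H$, a finite set $F\subset G$, and $\varepsilon>0$. First I would use density: choose $n$ large enough so that $\|v-P_nv\|$ is very small, specifically small enough that
\[
|\langle \pi_gv,v\rangle-\langle \pi_gP_nv,P_nv\rangle|<\varepsilon/2\quad\text{for every }g\in F.
\]
This estimate is the standard two-term bound
\[
|\langle \pi_gv,v\rangle-\langle \pi_gP_nv,P_nv\rangle|\leq \|v-P_nv\|\,\|v\|+\|P_nv\|\,\|v-P_nv\|\leq 2\|v\|\,\|v-P_nv\|,
\]
so any $n$ with $\|v-P_nv\|<\varepsilon/(4\|v\|)$ works (handle $v=0$ trivially).

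Now, since $P_n\in\pi'$, the subspace $P_nH$ is $\pi$-invariant, and the vector $P_nv$ lives inside it, so the matrix coefficient $g\mapsto\langle\pi_gP_nv,P_nv\rangle$ is a matrix coefficient of the subrepresentation $\pi|_{P_n}$. By hypothesis, $\pi|_{P_n}\prec\rho$, so I can find vectors $w_1,\dots,w_k\in K$ with
\[
\Bigl|\langle \pi_gP_nv,P_nv\rangle-\sum_{i=1}^k\langle \rho_gw_i,w_i\rangle\Bigr|<\varepsilon/2\quad\text{for every }g\in F.
\]
Combining the two estimates via the triangle inequality yields
\[
\Bigl|\langle \pi_gv,v\rangle-\sum_{i=1}^k\langle \rho_gw_i,w_i\rangle\Bigr|<\varepsilon\quad\text{for every }g\in F,
\]
which is exactly what is needed for $\pi\prec\rho$.

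There is no real obstacle here: the argument is a two-step approximation, first truncating to the invariant subspace $P_nH$ and then invoking the given weak containment on that piece. The only thing to be slightly careful about is that $P_n$ commutes with $\pi$ so that $\langle \pi_gP_nv,P_nv\rangle$ is genuinely a diagonal matrix coefficient of the subrepresentation (and not a more general coefficient), and that the approximation of $\langle\pi_gv,v\rangle$ by $\langle\pi_gP_nv,P_nv\rangle$ is uniform over the finite set $F$, which follows from the simple norm bound above.
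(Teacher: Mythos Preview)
Your proof is correct and follows essentially the same two-step approximation as the paper: first use $P_nv\to v$ (which follows from the increasing projections having dense union of ranges) to approximate the matrix coefficient of $\pi$ by one of $\pi|_{P_n}$, then invoke $\pi|_{P_n}\prec\rho$ and the triangle inequality. Your version is in fact slightly more explicit, giving the quantitative bound $2\|v\|\,\|v-P_nv\|$ and spelling out why $P_n\in\pi'$ makes $\langle\pi_gP_nv,P_nv\rangle$ a diagonal coefficient of the subrepresentation.
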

 	
 	\begin{proof}
 		Let $v\in H$, $F\subset G$ finite, and $\epsilon>0$. Then there exists $n\in\bn$ such that 
 		$$\left|\ip{\pi_gv}{v}-\ip{\pi_gP_nv}{P_nv}\right|<\epsilon/2\mbox{ for all }g\in F;$$
 		this is because $P_nv\rightarrow v$ and so $\pi_gP_nv\rightarrow \pi_gv$ for all $g\in F$.
 		
 		Now $\pi|_{P_n}\prec \rho$, so there exist vectors $u_1,\dots,u_p\in K$ such that 
 		$$\left|\ip{\pi_gP_nv}{P_nv}-\sum_{i=1}^p\ip{\rho_gu_i}{u_i}\right|<\epsilon/2\mbox{ for all }g\in F.$$
 		
 		Then, by the triangle inequality, 
 		$$\left|\ip{\pi_gv}{v}-\sum_{i=1}^p\ip{\rho_gu_i}{u_i}\right|$$$$\leq \left|\ip{\pi_gv}{v}-\ip{\pi_gP_nv}{P_nv}\right|+\left|\ip{\pi_gP_nv}{P_nv}-\sum_{i=1}^p\ip{\rho_gu_i}{u_i}\right|$$$$<\epsilon/2+\epsilon/2=\epsilon,$$
 		and this shows that $\pi\prec\rho$. 
 	\end{proof}
 	
The next lemma is based on the definition of weak inclusion and is straightforward. We provide the proof for the benefit of the reader. 
 	\begin{lemma}
 		\label{lemb1}
 		If $\pi$ is a unitary representation of $G$ on the Hilbert space $H$ and $n\in\bn\cup\{\infty\}$, then  $\pi^n=\underbrace{\pi\oplus\pi\oplus\dots\oplus\pi}_{n\mbox{ times}}\prec\pi$.
 	\end{lemma}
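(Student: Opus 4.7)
The plan is to unwrap the definition of weak containment directly and observe that a vector state of $\pi^n$ decomposes as a finite (or absolutely convergent) sum of vector states of $\pi$, after which the required approximation is immediate.

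More precisely, let $v = (v_1, v_2, \dots) \in H^n$ (finite tuple if $n < \infty$, $\ell^2$-summable sequence if $n = \infty$). Then for every $g \in G$,
\[
\ip{\pi^n_g v}{v} = \sum_{k=1}^{n} \ip{\pi_g v_k}{v_k}.
\]
If $n < \infty$, this already exhibits $\ip{\pi^n_g v}{v}$ as a finite sum of diagonal matrix coefficients of $\pi$, so we may simply take $w_i = v_i$ for $i = 1, \dots, n$; the error in the weak containment criterion is then $0$ for all $g \in F$.

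If $n = \infty$, given a finite set $F \subset G$ and $\epsilon > 0$, the key step is a truncation. Using that $\sum_{k=1}^{\infty} \|v_k\|^2 < \infty$, pick $N \in \bn$ so that $\sum_{k > N} \|v_k\|^2 < \epsilon$. Set $w_i = v_i$ for $i = 1, \dots, N$. By Cauchy–Schwarz and the unitarity of $\pi_g$, for every $g \in F$,
\[
\left|\ip{\pi^\infty_g v}{v} - \sum_{i=1}^{N} \ip{\pi_g w_i}{w_i}\right|
= \left|\sum_{k > N} \ip{\pi_g v_k}{v_k}\right| \leq \sum_{k > N} \|v_k\|^2 < \epsilon,
\]
which verifies the weak-containment condition.

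I do not anticipate any serious obstacle: the finite case is tautological once the definition is unwound, and the infinite case is a routine $\epsilon$-truncation of an absolutely convergent series. The only mild subtlety is recognizing that the number $p$ of summands allowed in the definition of $\prec$ may depend on $v$, $F$, and $\epsilon$, which is exactly what lets the truncation argument go through in the infinite case.
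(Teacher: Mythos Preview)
Your argument is correct. The underlying idea---that a diagonal matrix coefficient of $\pi^n$ is already a sum of diagonal matrix coefficients of $\pi$---is the same as the paper's, but your execution is more direct. The paper first does the case $n=2$, then reaches general finite $n$ by an induction through powers of $2$ together with transitivity of $\prec$ and the inclusion $\pi^n<\pi^{2^k}$; you instead handle all finite $n$ at once by taking $w_i=v_i$. For $n=\infty$, the paper invokes Lemma~\ref{lema} applied to the increasing projections $P_N$ onto the first $N$ coordinates, while you carry out the equivalent $\epsilon$-truncation by hand. Your route is shorter and self-contained; the paper's route has the virtue of reusing Lemma~\ref{lema}, which is needed elsewhere in the proof of Theorem~\ref{amen}.
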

 	
 	\begin{proof}
		For $n=2$, let $v_1\oplus v_2\in H\oplus H$, $F\subset G$ finite, and $\epsilon>0$. Then, for all $g\in F$. 
		$$\left|\ip{(\pi\oplus\pi)_g(v_1\oplus v_2)}{v_1\oplus v_2}-\left(\ip{\pi_gv_1}{v_1}+\ip{\pi_gv_2}{v_2}\right)\right|=0<\epsilon.$$
		This implies that $\pi\oplus\pi\prec\pi$.

		Next, for $n\in\bn$, by induction,
		$$\pi^{2^{k+1}}=\pi^{2^k}\oplus\pi^{2^k}\prec\pi^{2^k}\prec\pi.$$
		Then for $k$ such that $n\leq 2^k$, we have $\pi^n<\pi^{2^k}\prec \pi$ and therefore $\pi^n\prec\pi$.

In the case when $n=\infty$, let $P_n$ be the projection from $H^\infty$ to $H^n\oplus 0\oplus 0\dots$. Then $\cup P_nH^\infty$ is dense in $H^\infty$, and $\pi^\infty|_{P_n}\simeq \pi^n\prec\pi$. 
 Using Lemma \ref{lema}, we obtain that $\pi^\infty\prec\pi$.
 	\end{proof}

Assume now (vi). The analysis operator $\theta$, has domain 
$$\mathscr D(\theta):=\{v\in H: \sum_{g\in G} |\ip{v}{\pi_g\psi}|^2<\infty\},$$
$$\theta(v)=\left(\ip{v}{\pi_g\psi}\right)_{g\in G}.$$
The hypotheses imply that $\theta$ is densely defined and injective. We show that $\theta$ is also closed. Indeed, if $v_n\to v$ in $H$ and $\theta(v_n)\to(\alpha_g)_{g\in G}$ in $l^2(G)$, then, for any $g\in G$, the sequence $\{\ip{v_n}{\pi_g\psi}\}$ converges  both to $\alpha_g$ and to $\ip{v}{\pi_g\psi}$. Therefore $\ip{v}{\pi_g\psi}=\alpha_g$, for $g\in G$. This means that $v\in\mathscr D(\theta)$ and $\theta v=(\alpha_g)_{g\in G}$.

With \cite[Theorem 5.39]{Wei} the operator $\theta^*\theta$ is self-adjoint and injective, and $\mathscr D(\theta^*\theta)$ is a core of the operator $\theta$ in the sense that the closure of the graph of $\theta|_{\mathscr D(\theta^*\theta)}$ is the graph of $\theta$. In particular $\mathscr D(\theta^*\theta)$ is dense in $\mathscr D(\theta)$ with the inner product $\ip{v_1}{v_2}_\theta=\ip{v_1}{v_2}+\ip{\theta v_1}{\theta v_2}$.

In addition, the same computation as in Lemma \ref{commutation}, shows that, if $v\in\mathscr D(\theta)$ then $\pi_gv\in\mathscr D(\theta)$ and $\theta\pi_gv=\lambda(g)\theta v$, for all $g\in G$. And therefore, if $v\in \mathscr D(\theta^*\theta)$ then $\pi_gv\in \mathscr D(\theta^*\theta)$ and $\theta^*\theta\pi_gv=\pi_g\theta^*\theta v$, for all $g\in G$.

Let $E:\mathcal B(\br)\rightarrow \operatorname*{Projections}(H)$ be the spectral measure of $\theta^*\theta$, where $\mathcal B(\br)$ is the Borel sigma-algebra on $\br$. Let $\Delta_n=[1/n,n]$. Since $\theta^*\theta$ is a non-negative operator and it is injective $E(\{0\})=0$ so 
$$\cup_n \Delta_n=(0,\infty)\supseteq \operatorname*{Spectrum}(\theta^*\theta).$$
This means that the span of the union of the subspaces $E(\Delta_n)H$ is the whole space $H$.

We will show that $E(\Delta_n)\psi$ is a frame vector for the subrepresentation $\pi|_{E(\Delta_n)}$, which will imply that this subrepresentation is equivalent to a subrepresentation of $\lambda$ (Lemma \ref{lemsub}), and since the subspaces $E(\Delta_n)H$ converge to the whole space $H$, Lemma \ref{lema} will give us (i). 

We prove that, for $v\in H$ and $n\in\bn$, $E(\Delta_n)v\in \operatorname*{Domain}(\theta^*\theta)$  and 
\begin{equation}
	\label{eqvi1}
	\frac1{n^2}\left\|E(\Delta_n)v\right\|^2\leq\left\|\theta E(\Delta_n)v\right\|^2\leq n^2\left\|E(\Delta_n)v\right\|^2.
\end{equation}

We know that a vector $h\in H$ is in the domain of $\theta^*\theta$ iff 
$$\int_{\br}|x|^2\,d\ip{E(x)h}{h}<\infty.$$
We have 
$$\int_{\br}|x|^2\,d\ip{E(x)E(\Delta_n)v}{E(\Delta_n)v}=\int_{\br}|x|^2\chi_{\Delta_n}(x)\,d\ip{E(x)v}{v}\leq n^2\|v\|^2<\infty.$$
Thus $E(\Delta_n)v$ is in the domain of $\theta^*\theta$.

Next we compute
$$\|\theta E(\Delta_n)v\|^2=\ip{\theta E(\Delta_n)v}{\theta E(\Delta_n)v}=\ip{\theta^*\theta E(\Delta_n)v}{E(\Delta_n)v}$$$$=\int_{\br}|x|^2\chi_{\Delta_n}(x)\,d\ip{E(x)v}{v}.$$
Since $1/n\leq |x|\leq n$ for $x\in \Delta_n$ we get that
$$\frac1{n^2}\int_{\br}\chi_{\Delta_n}(x)\,d\ip{E(x)v}{v}\leq \|\theta E(\Delta_n)v\|^2\leq {n^2}\int_{\br}\chi_{\Delta_n}(x)\,d\ip{E(x)v}{v}.$$
On the other hand 
$$\int_{\br}\chi_{\Delta_n}(x)\,d\ip{E(x)v}{v}=\ip{E(\Delta_n)v}{v}=\|E(\Delta_n)v\|^2.$$
Thus we obtain \eqref{eqvi1}.

Also,
$$\|\theta E(\Delta_n)v\|^2=\sum_{g\in G}\left|\ip{E(\Delta_n)v}{\pi_g\psi}\right|^2=\sum_{g\in G}\left|\ip{E(\Delta_n)v}{E(\Delta_n)\pi_g\psi}\right|^2$$
$$=\sum_{g\in G}\left|\ip{E(\Delta_n)v}{\pi_gE(\Delta_n)\psi}\right|^2,$$
because $\pi_g$ commutes with $\theta^*\theta$ and therefore it commutes with $E(\Delta_n)$. 

This means, together with \eqref{eqvi1}, that $E(\Delta_n)\psi$ is a frame vector for $\pi|_{E(\Delta_n)}$. Therefore, with Lemma \ref{lemsub}, $\pi|_{E(\Delta_n)}$ is equivalent to a subrepresentation of $\lambda$.

We also have that $\cj{\cup_nE(\Delta_n)H}=H$. With Lemma \ref{lema} we get that $\pi\prec \lambda$. Thus (vi) implies (i).

\begin{remark} Using Theorem \ref{amen}, we can infer that a countable group $G$ is nonamenable iff any representation which has almost invariant vectors does not have a weak frame vector.  
 As a consequence, Theorem 3.6 from \cite{GP} is greatly improved. Moreover, we can do away with some terminology introduced in \cite{GP} (e.g., property NFT now simply means nonamenable).  
\end{remark}

\begin{example}\label{abg} As mentioned above, if $G$ is abelian, the Markov-Kakutani fixed point theorem supplies a left invariant state on $l^{\infty}(G)$, and therefore $G$ is amenable. When $G$ is countable, F{\o}lner criterion can be applied. We provide another argument after Theorem \ref{thai1}. Recall that if $G$ is an abelian, countable, discrete group then its dual 
$$\widehat{G}=\{\xi : G\to\mathbb{T} \text{  }| \text{  } \xi\text{ is a group homeomorphism } \}$$ is a compact, abelian, topological group such that its normalized Haar measure $\hat{\mu}$ is a Borel probability measure. The topology on $\widehat{G}$ is the topology of compact convergence.

The elements $h$ in $G$ can be seen as functions on the dual  $h: \widehat{G}\ni \xi\mapsto \overline{\xi(h)}$, and they form an orthonormal basis for $L^2(\widehat{G})$.

 The Fourier transform is defined for $f\in l^2(G)$ by 
$$ \mathcal{F} (f) (\xi) = \sum_{h\in G} f(h)\overline{ \xi (h) },$$
and is an isometric isomorphism $ \mathcal{F} : l^2(G)\to L^2(\widehat{G})$, by Plancherel's Theorem (for more details we refer to \cite{Folland}).

Define the unitary representation $\pi : G\to \mathcal U(L^2(\widehat{G}))$, by the multiplication operators:
$$\pi_g (f)(\xi) =  \overline{ \xi (g)} f(\xi), \quad(g\in G,\xi\in \widehat{G}, f\in L^2(\widehat{G})).$$ 
Notice that the constant function  $f=1\in L^2(\widehat{G})$ is an ONB vector.     
The Fourier transform intertwines the left-regular representation $\lambda$ on $l^2(G)$ and $\pi$ on $L^2(\widehat{G})$; in other words, the Fourier transform changes translations into multiplications. Indeed, for $f\in l^2(G)$, $g\in G$ and $\xi\in \widehat{G}$, we have
$$\mathcal F\lambda_g f(\xi)=\sum_{h\in G}\lambda_gf(h)\overline{\xi(h)}=\sum_{h\in G}f(g^{-1}h)\overline{\xi(h)}=\sum_{h'\in G}f(h')\overline{\xi(gh')}=\overline{\xi(g)}\sum_{h\in G}f(h)\overline{\xi(h)}=\pi_g\mathcal F f(\xi).$$
Then, after Theorem \ref{thai1} we will see that this representation has almost invariant vectors, and having ONB vector $f=1$,  by item ii) of Theorem \ref{amen} we obtain that $G$ is amenable. Of course, one might also finish off with its left regular representation having almost invariant vectors: the Fourier transform will bring it there, using the intertwining above. That intertwining however, works because the maps $h: \widehat{G}\ni \xi\mapsto \overline{\xi(h)}$ form an ONB in $L^2(\widehat{G})$ i.e.,  $\{\pi_g (1)\}_{g\in G}$ is ONB in $L^2(\widehat{G})$, i.e. $f=1$ is ONB vector for $\pi$. 
\end{example}
We are interested in the question: when does a representation, in particular a frame representation of a group, have almost invariant vectors?
In the case of a countable, abelian group $G$, we can use the Spectral Theorem to classify all such representations. First we need some notation:
for a regular Borel measure $\nu$ on $\widehat{G}$, define the representation $\pi_\nu$ of $G$ on $L^2(\widehat{G},\nu)$, by multiplication by the character $g$: 
\begin{equation}
	\label{eqai1}
	(\pi_\nu(g)f)(\xi)=\overline{\xi(g)}f(\xi),\quad(\xi\in\widehat{G},f\in L^2(\widehat{G},\nu)).
\end{equation}

For a cardinality $n$ and a representation $\pi$ of $G$ on a Hilbert space $H$, recall the notation 
\begin{equation}
	\label{eqai2}
	\pi^{n}=\underbrace{\pi\oplus\pi\oplus\dots\oplus\pi}_{n\mbox{ times}}.
\end{equation}

The Spectral Theorem states that any unitary representation of a group $G$ on a separable Hilbert space is equivalent to a representation of the form 
$$\pi=\pi_{\nu_\infty}^{\infty}\oplus\pi_{\nu_1}\oplus\pi_{\nu_2}^{2}\oplus\pi_{\nu_3}^{3}\oplus\dots,$$
where $\nu_\infty,\nu_1,\nu_2,\nu_3,\dots$ are mutually singular measures on $\widehat{G}$.

Since multiplicity does not affect weak equivalence of representations (Lemma \ref{lemb1}) we have that this representation is weakly equivalent to
$$\pi\sim \pi_{\nu_\infty}\oplus\pi_{\nu_1}\oplus\pi_{\nu_2}\oplus\pi_{\nu_3}\oplus\dots.$$
Since the measures $\{\nu_i\}$ are mutually singular, they are supported on some mutually disjoint sets $\{E_i\}$. Then the representation $\pi_{\nu_\infty}\oplus\pi_{\nu_1}\oplus\pi_{\nu_2}\oplus\pi_{\nu_3}\oplus\dots$ is equivalent to the representation $\pi_{\nu}$ where $\nu=\nu_\infty+\nu_1+\nu_2+\dots$, i.e., 
$$\nu(E)=\nu_\infty(E\cap E_\infty)+\nu_1(E\cap E_1)+\nu_2(E\cap E_2)+\dots,\quad(E\mbox{ Borel set in }\widehat{G}).$$
Hence, to study the existence of almost invariant vectors for representations of an abelian group $G$, it is enough to focus on representations of the form $\pi_\nu$. 

We define the {\it support} $\supp\nu$ of a regular Borel measure $\nu$ on $\widehat{G}$ to be the smallest compact set of full measure, or, equivalently, the complement of the largest open set of measure zero. In this way, $x\in \supp\nu$  if and only if for any neighborhood $V$ of $x$, $\nu(V\cap\supp \nu)>0$. 
\newcommand{\id}{\operatorname*{id}}
\begin{theorem}\label{thai1}
	Let $G$ be countable, abelian and let $\nu$ be a regular Borel measure on $\widehat{G}$. The representation $\pi_\nu$ has almost invariant vectors if and only if the identity element $\id_{\widehat{G}}$ is in the support of $\nu$. 
\end{theorem}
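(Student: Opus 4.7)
My approach is to exploit the multiplication structure of $\pi_\nu$ together with the fact that, since $G$ is discrete, the topology on $\widehat{G}$ coincides with pointwise convergence. Thus a local base at the identity character consists of the open sets
\[
V_{F,\epsilon}=\{\xi\in\widehat{G}:|\xi(g)-1|<\epsilon\text{ for all }g\in F\},
\]
indexed by finite $F\subset G$ and $\epsilon>0$, and the condition $\id_{\widehat{G}}\in\supp\nu$ is equivalent to $\nu(V_{F,\epsilon})>0$ for every such $F,\epsilon$.

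For the ``if'' direction, I would take the normalized indicators $f_{F,\epsilon}:=\chi_{V_{F,\epsilon}}/\sqrt{\nu(V_{F,\epsilon})}$ as candidates. A direct computation using $(\pi_\nu(g)f)(\xi)=\overline{\xi(g)}f(\xi)$ yields, for $g\in F$,
\[
\|\pi_\nu(g)f_{F,\epsilon}-f_{F,\epsilon}\|^2=\frac{1}{\nu(V_{F,\epsilon})}\int_{V_{F,\epsilon}}|\xi(g)-1|^2\,d\nu(\xi)<\epsilon^2.
\]
Indexing the resulting net by the directed set of pairs $(F,\epsilon)$, ordered so that $F$ grows and $\epsilon$ shrinks, and observing that any fixed $g\in G$ is eventually in $F$, delivers the required almost invariance.

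For the converse, I would prove the contrapositive. If $\id_{\widehat{G}}\notin\supp\nu$, then since the $V_{F,\epsilon}$ form a local base, there exist a finite $F\subset G$ and $\epsilon>0$ with $\nu(V_{F,\epsilon})=0$. Consequently, for $\nu$-almost every $\xi$ some $g\in F$ satisfies $|\xi(g)-1|\geq \epsilon$, and hence $\sum_{g\in F}|\xi(g)-1|^2\geq \epsilon^2$ $\nu$-almost everywhere. For any unit vector $f\in L^2(\widehat{G},\nu)$, summing the displacement identity $\|\pi_\nu(g)f-f\|^2=\int|\xi(g)-1|^2|f(\xi)|^2\,d\nu$ over $g\in F$ gives $\sum_{g\in F}\|\pi_\nu(g)f-f\|^2\geq \epsilon^2$, so some $g\in F$ forces $\|\pi_\nu(g)f-f\|\geq \epsilon/\sqrt{|F|}$, uniformly in $f$. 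This uniform lower bound precludes the existence of almost invariant vectors.

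I do not anticipate a serious obstacle. The one delicate point to verify is the identification of $\supp\nu$ with the set of characters whose every open neighborhood has positive $\nu$-measure, and the subsequent reduction, via the local base, of any such neighborhood to one of the form $V_{F,\epsilon}$ with $F$ finite. This finiteness is exactly what makes the bound in the ``only if'' direction quantitative and uniform.
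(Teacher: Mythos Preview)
Your proof is correct and follows the same overall shape as the paper's: normalized indicators of shrinking neighborhoods of $\id_{\widehat G}$ for the ``if'' direction, and a quantitative lower bound on $\sum_{g\in F}\|\pi_\nu(g)f-f\|^2$ over a finite set $F$ for the ``only if'' direction.

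The one noteworthy difference is in how the finite set $F$ is produced in the ``only if'' direction. The paper argues pointwise on $\supp\nu$: for each $\xi\neq\id_{\widehat G}$ it picks $g_\xi$ with $\xi(g_\xi)\neq 1$, finds a neighborhood on which $|\,\cdot\,(g_\xi)-1|$ stays bounded below, and then invokes compactness of $\supp\nu$ to extract a finite subcover $V_{\xi_1},\dots,V_{\xi_n}$ and the associated finite set $\{g_{\xi_1},\dots,g_{\xi_n}\}$. You instead exploit the explicit description of the topology on $\widehat G$ via the sub-basic sets $V_{F,\epsilon}$, so that $\id_{\widehat G}\notin\supp\nu$ hands you a finite $F$ and an $\epsilon>0$ directly, with no covering argument needed. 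This is a genuine simplification: it replaces the compactness step by the observation that the pointwise-convergence topology already has a local base built from finite subsets of $G$. One small wording caveat: the particular $g\in F$ achieving $\|\pi_\nu(g)f-f\|\geq\epsilon/\sqrt{|F|}$ may depend on $f$; what is uniform in $f$ is the sum bound $\sum_{g\in F}\|\pi_\nu(g)f-f\|^2\geq\epsilon^2$, and that is all you need to rule out almost invariant vectors.
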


\begin{proof}
	Assume $\id_{\widehat{G}}$ is in the support of $\nu$. If $\nu(\{\id_{\widehat{G}}\})>0$, then we can take the vector $v=\frac1{\sqrt{\nu(\{\id_{\widehat{G}}\})}}\chi_{\{\id_{\widehat{G}}\}}$, and this is an invariant vector for $\pi_\nu$. Therefore, we can assume $\nu(\{\id_{\widehat{G}}\})=0$.

	Consider $\{V_n\}_{n\in\bn}$ some decreasing basis of compact neighborhoods of $\id_{\widehat{G}}$ in $\hat G$, so $\cap_n V_n=\{\id_{\widehat{G}}\}$. In particular $\lim_n\nu(V_n)=0$. We also have that $\nu(V_n)>0$ because $\id_{\widehat{G}}\in\supp\nu$. 
	
	Define the vectors $v_n=\frac{1}{\sqrt{\nu(V_n)}}\chi_{V_n}$ in $L^2(\widehat{G},\nu)$. We will show that $\{v_n\}$ is a sequence of almost invariant vectors. Let $g\in G$; we have:
	$$\|\pi_\nu(g)v_n-v_n\|^2=\frac{1}{\nu(V_n)}\int_{\widehat{G}}|\overline{\xi(g)}-1|^2\chi_{V_n}(\xi)\,d\nu(\xi)=\frac{1}{\nu(V_n)}\int_{V_n}|\overline{\xi(g)}-1|^2\,d\nu(\xi).$$
	The map $\xi\mapsto \overline{\xi(g)}$ is continuous on $\widehat{G}$, therefore, given $\epsilon>0$, there exists an index $N$ such that $|\overline{\xi(g)}-1|<\epsilon$ for all $\xi\in V_n$, and all $n\geq N$. Then 
	$$\|\pi_\nu(g)v_n-v_n\|^2\leq \frac1{\nu(V_n)}\epsilon^2\nu(V_n)=\epsilon^2,\quad (n\geq N),$$
	and this means that $\{v_n\}$ are almost invariant vectors.

	For the converse, if $\id_{\widehat{G}}$ is not in the support of $\nu$, then for any $\xi\in\supp\nu$, since $\xi\neq \id_{\widehat{G}}$, there exists $g_\xi\in G$ such that $\xi(g_\xi)\neq 1$. Then, by continuity, there exists a neighborhood $V_\xi$ of $\xi$ in $\widehat{G}$, and a constant $c_\xi>0$ such that $|\eta(g_\xi)-1|\geq c_\xi$ for all $\eta\in V_\xi$. 
	
	Since $\supp\nu$ is compact, there is a finite subcover $V_{\xi_1},\dots,V_{\xi_n}$ of $\supp\nu$. Suppose now $\{v_i\}$ is some net of almost invariant  vectors, $\|v_i\|=1$ for all $i$. To simplify notation, let $g_k:=g_{\xi_k}$, with corresponding constants $c_k$, $k=1,\dots,n$. Consider the sum
	$$S_i:=\sum_{k=1}^n\|\pi(g_k)v_i-v_i\|^2.$$
	On one hand, the sum converges to zero, because the vectors $\{v_i\}$ are almost invariant. On the other hand
	$$S_i=\sum_{k=1}^n\int_{\widehat{G}}|\overline{\xi(g_k)}-1|^2|v_i(\xi)|^2\,d\nu(\xi)\geq \sum_{k=1}^n\int_{V_{\xi_k}}|  \overline{\xi(g_k)}-1|^2|v_i(\xi)|^2\,d\nu(\xi)$$
	$$\geq\sum_{k=1}^n c_{k}\int_{V_{\xi_k}}|v_i(\xi)|^2\,d\nu(\xi)\geq \min_k c_{k}\sum_{k=1}^n\int_{V_{\xi_k}}|v_i(\xi)|^2\,d\nu(\xi)$$
	$$\geq \min_k c_{k}\int_{\cup_k V_k}|v_i(\xi)|^2\,d\nu(\xi)\geq \min_k c_{k}\int_{\supp\nu}|v_i(\xi)|^2\,d\nu(\xi)= \min_k c_{k},$$
	a contradiction. 
\end{proof}

\begin{remark}
	\label{remai2} If $G$ countable and abelian then the support of the normalized Haar measure $\hat{\mu}$ is the whole $\widehat{G}$. Obviously, $\id_{\widehat G}$ belongs to  the support of $\hat{\mu}$, thus by Theorem \ref{thai1} the representation from Example \ref{abg} has almost invariant vectors.  
		
\end{remark}

\begin{example}
Once abelian groups fall in the class of amenable groups, one can use a variety of algebraic operations to extend the class. 
We consider for a  moment the Baumslag-Solitar group $BS(1,2)=\{u,t \mid utu^{-1}=t^2\}$, which is amenable because  it is {\it metabelian}, i.e., a group whose commutator subgroup is abelian. Equivalently, a group $G$ is metabelian if and only if there is an abelian normal subgroup $A$ such that the quotient group $G/A$ is abelian.

 Actually, for $N\in\mathbb{N}$ all groups $BS(1,N)=\{u,t \mid utu^{-1}=t^N\}$ are metabelian. This can be seen from the identification of $BS(1,N)$ with the semidirect product $B \rtimes_{\alpha} \bz$ where $$B=\mathbb{Z}[1/N]=\{k/N^n : k\in\bz, n\in\bn\},$$ and $\alpha: B\to B$, $\alpha(b)=Nb$ implementing the morphism $\mathbb{Z}\to\text{Aut}(B)$ by $j\to \alpha^j$. 
We focus on $BS(1,2)$ because many of its representations bear special connection to the first wavelets/frame constructions (see, e.g., \cite{HW}). For example, the unitary operators $U,T:L^2(\mathbb{R}) \to L^2(\mathbb{R})$:
\[
Uf(x)=\dfrac{1}{\sqrt{2}}f(x/2), \quad Tf(x)=f(x-1).
\]
define a representation  
\[
\pi: BS(1,2) \to \mathcal U(L^2(\mathbb{R})), \quad u \to U, \quad t \to T.
\]
 If we consider a {\it subset} of the whole group, namely $S= \{ u^mt^n \mid m,n, \in \mathbb{Z} \} \subset BS(1,2)$, then there is a  vector $\psi \in L^2(\mathbb{R})$ such that $\{\pi_g\psi\}_{g \in S}$ is in fact an ONB. Actually there are many such vectors, the so called wavelets, the simplest one being the Haar wavelet $\psi= \frac1{\sqrt{2}}\left(\chi_{[0,1/2)}-\chi_{[1/2,1)}\right)$ (see e.g., \cite{HW}). We caution the reader that many authors use the term `` frame/wavelet representation'' for representations of $BS(1,N)$ and certain generalizations (see \cite{HL,DJ06,DJ07,LPT}), to mean that only the wavelet part $u^mt^n $ (translations dilations) is applied to the vector, not all elements of the group.

This representation does not admit a frame vector for the {\it whole} group $BS(1,2)$; actually, not even a Bessel sequence. Assume by contradiction that $\psi\in L^2(\br)$ is a Bessel vector for the whole group, so $\{\pi_g\psi : g\in BS(1,2)\}$ is a Bessel sequence for $L^2(\br)$ with Bessel bound $B>0$. Note that, for $n\in\bn$, $U^{-n}TU^{n}$ is the operator of translation by $\frac1{2^n}$ in $L^2(\br)$ which we denote by $T_{1/2^n}$. Indeed, for $f\in L^2(\br)$ and $x\in\br$,
$$U^{-n}TU^nf(x)=\sqrt{2^n} TU^nf(2^nx)=\sqrt{2^n}U^nf(2^nx-1)$$
$$=f\left(\frac1{2^n}(2^nx-1)\right)=f\left(x-\frac1{2^n}\right).$$

But then
$$\pi(u^{-n}tu^n)\psi=U^{-n}TU^{n}\psi=T_{1/2^n}\psi\rightarrow\psi\mbox{ as }n\to\infty.$$
Therefore, there exists $N\in\bn$ such that, for $n\geq N$,
$$ |\ip{\pi(u^{-n}tu^n)\psi}{\psi}|\geq \ip{\psi}{\psi}-\frac12\|\psi\|^2=\frac12\|\psi\|^2.$$
Then, with the upper frame bound, $$B\|\psi\|^2\geq\sum_{g\in BS(1,2)}|\ip{\pi_g\psi}{\psi}|^2\geq \sum_{n\geq N}|\ip{\pi(u^{-n}tu^n)\psi}{\psi}|^2\geq \sum_{n\geq N}\frac12\|\psi\|^2=\infty,$$
a contradiction. 
\end{example}

	 We turn now to groups with {\it infinite conjugacy classes (ICC)}, that is for all $g\in G$, $g\neq e$, the conjugacy class $\{xgx^{-1} : x\in G\}$ is infinite. 
	 
	 \begin{theorem}
	 	\label{thai3}
	 	Let $G$ be an amenable ICC group. Then any frame representation has almost invariant vectors. 
	 \end{theorem}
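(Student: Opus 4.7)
The plan is to apply Lemma~\ref{lemsub} to identify $\pi$ with a subrepresentation $\lambda|_{PH}$ of the left regular representation, where $P$ is a nonzero projection in the commutant $\lambda(G)'$, and then exploit the II$_1$ factor structure of this commutant granted by the ICC hypothesis.

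First, I would recall that the commutant $\lambda(G)'$ is the right group von Neumann algebra, equipped with the canonical faithful normal trace $\tau(x)=\ip{x\delta_e}{\delta_e}$. Since $G$ is ICC, $\lambda(G)''$ is a factor, and hence so is its commutant $\lambda(G)'$; being infinite dimensional and carrying a faithful finite trace, $\lambda(G)'$ is of type II$_1$, so in particular $\tau(P)>0$. Choosing $n\in\bn$ with $n\tau(P)\geq 1$, the structure of a II$_1$ factor allows one to pick mutually orthogonal projections $Q_1,\dots,Q_n\in\lambda(G)'$ with $\sum_{i=1}^n Q_i=I$ and $\tau(Q_i)=1/n\leq\tau(P)$. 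Murray--von Neumann comparability inside $\lambda(G)'$ then produces partial isometries $V_i\in\lambda(G)'$ with $V_i^*V_i=Q_i$ and $V_iV_i^*\leq P$.

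Given a finite set $F\subset G$ and $\epsilon>0$, amenability of $G$ supplies a unit vector $\xi\in\ell^2(G)$ with $\|\lambda_g\xi-\xi\|<\epsilon$ for all $g\in F$. I would set $w_i:=V_i\xi$ and note $w_i\in V_iV_i^*\ell^2(G)\subseteq P\ell^2(G)$. Because each $V_i$ commutes with $\lambda_g$, $V_i^*V_i=Q_i$, and $\sum_i Q_i=I$, a direct computation yields
\[
\sum_{i=1}^n\ip{\lambda_g w_i}{w_i}=\sum_{i=1}^n\ip{V_i^*V_i\lambda_g\xi}{\xi}=\ip{\lambda_g\xi}{\xi},
\]
so $\bigl|\sum_{i=1}^n\ip{\lambda_g w_i}{w_i}-1\bigr|\leq\|\lambda_g\xi-\xi\|<\epsilon$ for every $g\in F$. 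By the definition of weak containment this is exactly $1_G\prec\lambda|_{PH}\simeq\pi$, as desired; translating the family $\{w_i\}$ back through the intertwiner then produces almost invariant vectors for $\pi$ itself via the standard rescaling/partial-sum trick.

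The principal obstacle is the algebraic step of producing $I=\sum_i Q_i$ with each $Q_i$ subequivalent to $P$: this leans on the full structure theory of II$_1$ factors (faithful normal trace with full range on projections, plus comparability of projections in a factor), and is precisely where the ICC hypothesis is used---without it, $\lambda(G)'$ need not be a factor and the nonzero projection $P$ could have small central support, preventing any such redistribution of the almost invariant vector $\xi$ into $PH$. Once the decomposition is in place, the Folner averaging above is a short, self-contained computation.
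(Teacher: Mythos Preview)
Your proof is correct and follows essentially the same route as the paper's: both identify $\pi$ with $\lambda|_{PH}$ via Lemma~\ref{lemsub}, use the ICC hypothesis to make $\lambda(G)'$ a II$_1$ factor, partition the identity into projections $Q_1,\dots,Q_n$ each (sub)equivalent to a piece of $P$, and then push the almost invariant vectors of $\lambda$ into $PH$. The only difference is cosmetic: the paper records the equivalence $\lambda\simeq(\lambda|_Q)^n$ and invokes Lemma~\ref{lemb1} to strip the multiplicity, whereas you accomplish the same step by the direct matrix-coefficient identity $\sum_i\ip{\lambda_g V_i\xi}{V_i\xi}=\ip{\lambda_g\xi}{\xi}$.
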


	 \begin{proof}
	 	Let $\pi$ be a frame representation. By Lemma \ref{lemsub}, $\pi$ is equivalent to a subrepresentation of the left-regular representation $\lambda$, so there exists a projection $P$ in the commutant $\lambda'$ such that $\pi$ is equivalent to $\lambda|_P$. 
	 	\newcommand{\tr}{\operatorname*{Tr}}
	 	Since the group $G$ is ICC, the von Neumann algebra $\lambda'$ is a $II_1$ factor with trace $\tr(x)=\ip{x\delta_e}{\delta_e}$, $x\in\lambda'$. The projection $P$ has a trace $\tr(P)=\ip{P\delta_e}{\delta_e}>0$. Let $n\in\bn$ with $\frac1n<\tr(P)$. Since $\lambda'$ is a $II_1$ factor, there exists a projection $Q$ in $\lambda'$ such that $\tr(Q)=\frac1n$ and $Q\leq P$. Using again the $II_1$ factor property of $\lambda'$ we can construct inductively projections $Q_1=Q,Q_2,\dots, Q_n$ in $\lambda'$ such that $Q_i\perp Q_j$ for $i\neq j$, $\tr(Q_i)=\frac1n$, and $Q_1+Q_2+\dots+Q_n=I$. $\tr(Q_i)=\frac1n$  means that the projections are equivalent in $\lambda'$, i.e., there exist partial isometries $V_{i}$ in $\lambda'$ with $V_i^*V_i=Q$ and $V_iV_i^*=Q_i$. This implies that the representations $\lambda|_Q$ and $\lambda|_{Q_i}$ are equivalent. Then the representations $\lambda|_Q^{(n)}$ and $\lambda|_{Q_1}\oplus\lambda|_{Q_2} \oplus\dots\lambda|_{Q_n}=\lambda$ are equivalent. 

	 	Since $G$ is amenable, $\lambda$ has almost invariant vectors. Since weak containment does not depend on multiplicity (Lemma \ref{lemb1}), $\lambda|_Q$ has almost invariant vectors, and therefore $\lambda|_P$ has as well. This shows that $\pi$ has almost invariant vectors. 
	 \end{proof}

\section{Weak Frame Representations}\label{wfr}
Given a group representation we seek to relax the inequalites \eqref{frame} while maintaining the requirements about the almost invariant vectors. We are thus weakening the amenability property. We mention that there are many weaker forms of amenability which can be considered in a more general setting of a locally compact group (for example weakly amenable \cite{CoHa}, inner amenable \cite{Eff}, Haagerup's property \cite{Cherix}, relatively amenable \cite{CaMo}) which are defined with the Fourier algebra of the group and the left regular representation in the background, or by modifying the left invariance of the $l^{\infty}(G)$ state, or by relaxing a fixed point property of the action of the group on certain topological spaces. Our attempt is based on Theorem \ref{amen} by simply relaxing the frame inequalities.  
Recall that a collection of vectors $\{f_n\}_{n \in S}$ ($S$ countable) in a Hilbert space $H$ is called a {\it weak frame} if for every $v \in H$, $v \neq 0$, we have: 
\begin{equation} \label{weak frame}
0 < \sum_{n\in S} |\langle v,f_n\rangle|^2 <\infty
\end{equation}

\begin{definition}\label{afrem} 
Let $G$ be a countable discrete group. We say that  $G$ is {\it framenable} if there exist a unitary representation $\pi$ of $G$ with $1_G\prec \pi$, a countable subset $S\subset G$, and a vector $v\in H$ such that $\{\pi_gv\}_{g \in S}$ is a weak frame.
\end{definition}
\begin{remark}\label{tri} The trivial representation of any countable group, obviously, does not admit a weak frame vector (nor a Bessel vector, actually). More generally, if a representation has a fixed point then there cannot be a weak frame vector.  The definition above emphasizes a degree of separation between a representation $\pi$ with weak frame vectors and the trivial representation. 
We also note that as far as weakening the frame condition goes, while maintaining $1_G\prec \pi$, one cannot relax it any further to just a Bessel vector, because the condition becomes trivial. Indeed, for any countable group the representation $\pi:=1_G \oplus \lambda_G$ satisfies $1_G\prec\pi$. Also, with $v:=(0, \delta_e)$ we have that $\{\pi_g(v)\}_{g\in G}$ forms a Bessel sequence. This shows that any group has a representation $\pi$ with $1_G\prec \pi$ which has a Bessel vector. Hence, the requirement that  $\{\pi_g(v)\}_{g\in S}$ be total is crucial in the definition of framenability.  
 \end{remark}
  
By Theorem \ref{amen} amenable groups are framenable. The latter class however, is very large as we shall see later, based on some permanence properties which are not shared  by the amenable class. For example, amenability passes to subgroups, e.g., a sequence of almost invariant vectors for the left regular representation of a group stays almost invariant for its restriction to a subgroup. While this property is maintained with respect to arbitrary representations, the (weak) frame vector may not transfer to the subgroup representation. The Bessel inequality does, but the totality condition can be lost.  One of the first important facts is that amenability is an antagonist of property (T) for infinite groups. Actually, property (T) is a strong antagonist of a weaker form of amenability known as Haagerup’s property.

\begin{definition} $G$ satisfies {\it Haagerup's property} if  there exists a representation $\pi: G\to \mathcal U(H)$ such that $1_G\prec \pi$ and 
$\displaystyle{\lim_{g\to\infty } \langle \pi_g \xi, \eta   \rangle=0}$,  for all $\xi, \eta \in H$. The limit over $g\to\infty$ means that $\pi$ is $c_0$, i.e., for all $\epsilon >0$  there exists $E\subset G$ finite such that for every $g\notin E$ we have $| \langle \pi_g \xi, \eta   \rangle  |<\epsilon$.  

\end{definition}
\begin{remark} Amenable groups satisfy Haagerup's property. This follows from multiple characterizations of both amenability and Haagerup's property (see e.g., \cite{BO}). Notice that the $c_0$ condition of a representation is satisfied whenever $\pi$ admits a (weak) frame vector $w$. By density, one needs to check the limit is zero for all $\xi=\pi_g(w)$. Owing to the Bessel inequality, one concludes by the divergence test that the general term in the series must converge to zero. Hence, all other things being equal (i.e., $1_G\prec \pi$) from Theorem \ref{amen} we see (again) that amenable implies Haagerup's property. Let us emphasize that the situation is quite different when the (weak) frame is taken with respect to a (countable) subset $S\subset G$. The divergence test in this case does not warrant $\displaystyle{\lim_{g\to\infty } \langle \pi_g \xi, \eta   \rangle=0}$ because $g$ can approach $\infty$ outside $S$. We will see that there are many examples of framenable groups without Haagerup's property, but we are not aware of the other way around (see the list of examples and problems in the last section).  

\end{remark}

\begin{definition}
A countable group $G$ {\it satisfies property $(T)$} if any unitary representation  $\pi:G\to \mathcal U(H)$  with $1_G\prec\pi$ has a nonzero $G$-invariant vector $w$, i.e., $\pi_g w = w$, for all $g\in G$. In short, a group satisfies property (T) if whenever $1_G\prec \pi$ then $1_G < \pi$.  
If $N$ is a subgroup of $G$, the pair $(N,G)$ {\it satisfies relative property (T)} if any unitary representation  $\pi:G\to \mathcal U(H)$  with $1_G\prec\pi$ has a nonzero $N$-invariant vector.

\end{definition}

\begin{remark} 
A countable group with Haagerup's property cannot admit a countable subgroup $N$ such that the pair $(N,G)$ satisfies relative property (T) (a non zero invariant vector violates $\displaystyle{\lim_{g\to\infty } \langle \pi_g \xi, \eta   \rangle=0}$).    
\end{remark}

Non framenability may seem difficult to check for a given group. Actually, all our examples arise from the next proposition which shows that the restrictive class of property (T) groups is not framenable. It would be interesting to find non property (T) groups which are non framenable (see the problem list in the next section). 

\begin{proposition}\label{nonT} If $G$ is framenable then $G$ cannot satisfy property (T).
\end{proposition}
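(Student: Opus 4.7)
The plan is a short proof by contradiction. Suppose $G$ is framenable, so there exist a representation $\pi : G \to \mathcal{U}(H)$ with $1_G \prec \pi$, a countable subset $S \subset G$, and a vector $v \in H$ such that $\{\pi_g v\}_{g \in S}$ is a weak frame in $H$. Assume, toward a contradiction, that $G$ also has property (T). Then, applied to $\pi$, property (T) produces a nonzero $G$-invariant vector $w \in H$, that is $\pi_g w = w$ for every $g \in G$.

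Next I would exploit invariance to see that the inner products of $w$ against the weak frame are constant:
\[
\langle w, \pi_g v\rangle = \langle \pi_{g^{-1}} w, v\rangle = \langle w, v\rangle \quad \text{for all } g \in G.
\]
Now apply the Bessel half of the weak frame condition to $w$: since $\{\pi_g v\}_{g \in S}$ is Bessel, with bound $B > 0$,
\[
\sum_{g \in S} |\langle w, \pi_g v\rangle|^2 \leq B\, \|w\|^2 < \infty.
\]
Because $S$ is countable (hence infinite, by the convention fixed at the start of Section 2), the left-hand side equals $|S| \cdot |\langle w, v\rangle|^2$, which forces $\langle w, v\rangle = 0$, and therefore $\langle w, \pi_g v\rangle = 0$ for every $g \in G$, in particular for every $g \in S$.

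Finally, I invoke totality: since $\{\pi_g v\}_{g \in S}$ has dense linear span in $H$ and $w$ is orthogonal to each of these vectors, we conclude $w = 0$, contradicting the fact that $w$ was a nonzero invariant vector. Hence $G$ cannot have property (T). There is no serious obstacle here; the only thing to be careful about is noting that $S$ is infinite (so that the constant sum diverges unless $\langle w, v\rangle = 0$) and that both the Bessel and totality halves of the weak-frame condition are used in an essential way, which is consistent with Remark \ref{tri}.
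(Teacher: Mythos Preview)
Your proof is correct and follows essentially the same approach as the paper's: both arguments obtain a nonzero invariant vector $w$ from property (T), observe that $\langle w,\pi_g v\rangle$ is constant in $g$, and then use the two halves of the weak-frame condition to derive a contradiction. The paper compresses your last two steps into the single chain $0<\sum_{g\in S}|\langle w,\pi_g v\rangle|^2=\sum_{g\in S}|\langle w,v\rangle|^2<\infty$, but your explicit invocation of Bessel followed by totality is the same reasoning unpacked.
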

\begin{proof}
 Let $S\subset G$ a countable subset and $\pi$ be a representation having almost invariant vectors such that there is $v \in H$ with:
\[
0 < \sum_{g \in S} |\langle x,\pi_gv \rangle |^2 < \infty, \quad \text{ for all }x\in H, x\neq 0
\]
If $G$ had property (T), then there would be a $w \neq 0$ with $\pi_gw=w$ for all $g \in G$. Then:
\[
 0 <\sum_{g \in S}|\ip{w}{\pi_gv}|^2= \sum_{g \in S} |\langle \pi_g w,\pi_gv \rangle |^2 =  \sum_{g \in S} |\langle w,v \rangle |^2 <\infty.
\]
The second inequality implies that $\langle w,v \rangle = 0$, which contradicts the first.

\end{proof}

The next sequence of propositions shows that the class of framenable groups is large. However, some permanence properties are not maintained anymore as for the class of amenable groups,  e.g., inheritance. These results will also be used in the next Examples section.   

\begin{proposition}\label{surj} If $N$ is a framenable group, $G$ a countable group, and $\varphi: G\to N$ a surjective morphism, then $G$ is framenable.
\end{proposition}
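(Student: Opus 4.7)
The strategy is to pull the framenable data for $N$ back to $G$ along $\varphi$, and to lift the indexing set using a set-theoretic section. Framenability of $G$ will then be witnessed by the composition $\pi := \rho \circ \varphi$ together with this lifted set.

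First I would fix the framenable data for $N$: a unitary representation $\rho : N \to \mathcal U(K)$ with $1_N \prec \rho$, a countable subset $T \subset N$, and a vector $w \in K$ such that $\{\rho_n w\}_{n \in T}$ is a weak frame for $K$ (hence Bessel and total). Define $\pi := \rho \circ \varphi : G \to \mathcal U(K)$; this is a unitary representation of $G$ on the same Hilbert space $K$.

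Next I would verify $1_G \prec \pi$. Because $1_N \prec \rho$, there is a net of unit vectors $(v_i)$ in $K$ with $\|\rho_n v_i - v_i\| \to 0$ for every $n \in N$. Since $\pi_g v_i - v_i = \rho_{\varphi(g)} v_i - v_i$ for every $g \in G$, the same net is almost invariant for $\pi$, and so $1_G \prec \pi$.

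Then I would lift the indexing set. Using surjectivity of $\varphi$, choose for each $n \in T$ some $g_n \in \varphi^{-1}(\{n\})$, and set $S := \{g_n : n \in T\} \subset G$. The map $n \mapsto g_n$ is injective because $\varphi(g_n) = n$, so $|S| = |T|$; in particular $S$ is countable (infinite) in the sense of the paper. By construction, the family $\{\pi_{g_n} w\}_{n \in T}$ coincides with $\{\rho_n w\}_{n \in T}$, and for every $x \in K$,
\[
\sum_{g \in S} |\langle x, \pi_g w\rangle|^2 = \sum_{n \in T} |\langle x, \rho_n w\rangle|^2,
\]
so the Bessel bound and the totality of the span transfer verbatim. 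Thus $\{\pi_g w\}_{g \in S}$ is a weak frame in $K$, and $G$ is framenable with witness $(\pi, S, w)$.

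There is essentially no obstacle in this argument; it is a direct pullback along the surjection $\varphi$. The only points to watch are that $S$ be genuinely countable (ensured by the bijection $T \leftrightarrow S$) and that both conditions defining a weak frame, Bessel and totality, transfer term-by-term under the identification $\langle x, \pi_{g_n} w\rangle = \langle x, \rho_n w\rangle$; both are immediate.
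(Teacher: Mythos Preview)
Your proof is correct and follows essentially the same route as the paper's: both define the $G$-representation as the composition with $\varphi$, transfer the almost invariant vectors directly, and lift the indexing set $T$ to $S\subset G$ via a set-theoretic section of $\varphi$, after which the weak frame sums coincide term by term. The only difference is notational (your $\rho,\pi,T,S$ play the roles of the paper's $\pi,\rho,S,S'$).
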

\begin{proof} 
Let $\pi: N\to \mathcal U(H)$ be a unitary representation of $N$ with $1_N\prec \pi$, $S\subset N$ countable, and $w\in H$ satisfying $0< \sum_{n\in S}|\langle\pi_n( w), v \rangle|^2<\infty $ for all $v\neq 0$. Define the unitary representation $\rho: G\to \mathcal U(H)$, $\rho_g(v):=\pi_{\varphi(g)}(v)$, for all $v\in H$.  Let $(v_i)_{i\in I}$ be a net of almost invariant vectors for $\pi$. Then $\displaystyle{\lim_{i\in I}\| \rho_g(v_i) - v_i\|=\lim_{i\in I}\| \pi_{\varphi(g)}(v_i) - v_i\|=0}$ for all $g\in G$, hence $1_G\prec \rho$.  
Now, let $\{n_k |  k\in\mathbb{N}  \}$ be a countable enumeration of the set $S$. Because $\varphi $ is surjective, for each $n_k\in S$ one can choose precisely one $g_k\in \varphi^{-1}(n_k)$. Then the set $S':=\{ g_k | k\in\mathbb{N} \}\subset G$ is countable, and for every $v\in H$ we have
$$\sum_{g\in S'}|\langle\rho_{g}( w), v \rangle|^2 = \sum_{k\in \mathbb{N} }|\langle\pi_{n_k}( w), v \rangle|^2=\sum_{n\in S}|\langle\pi_n( w), v \rangle|^2$$
Hence $\{ \pi_g (w) \}_{g\in S'}$ is a weak frame. Therefore $G$ is framenable. 
\end{proof}

\begin{proposition}\label{products} (i) If $G$ is a framenable group and $N$ is any countable or finite group then the direct product $G\times N $ is framenable. \\
(ii) If $G$ is framenable and $\alpha: G \to \textup{Aut}(N)$ is a group morphism then the semi-direct product $G\ltimes N$ is framenable. \\
(iii) If $G$ is a framenable group and $N$ is any countable or finite group then the free product $G\star N$ is framenable.
\end{proposition}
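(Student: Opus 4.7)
The plan is uniform across the three parts: reduce each to Proposition \ref{surj} by exhibiting a canonical surjective group homomorphism from the ambient group onto the framenable factor $G$. Since $G$ is framenable it is countable by definition, and each of the constructions $G\times N$, $G\ltimes N$, $G\star N$ remains countable when $N$ is countable or finite, so the hypotheses of Proposition \ref{surj} are in place as soon as a surjection is produced.

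For (i), I would take the first-coordinate projection $p_1: G\times N \to G$, $(g,n)\mapsto g$. This is manifestly a surjective group homomorphism, so Proposition \ref{surj} immediately yields framenability of $G\times N$.

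For (ii), I would use that in the semidirect product $G\ltimes_\alpha N$ the subgroup $\{e\}\times N$ is normal with quotient canonically isomorphic to $G$; the quotient map $(g,n)\mapsto g$ is a surjective group homomorphism, and Proposition \ref{surj} again gives framenability of $G\ltimes N$.

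For (iii), I would invoke the universal property of the free product: the identity $\id_G$ together with the trivial homomorphism $N\to G$ (sending every element of $N$ to $e$) induces a homomorphism $G\star N \to G$ that restricts to the identity on $G$, hence is surjective. Proposition \ref{surj} then gives framenability of $G\star N$. No genuine obstacle should arise: the unifying observation is simply that each of the three constructions admits $G$ as a canonical quotient, and Proposition \ref{surj} does all the heavy lifting of transporting the weak frame vector, the almost invariant vectors, and the countable indexing set.
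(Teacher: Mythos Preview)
Your proposal is correct and matches the paper's proof essentially verbatim: all three parts are reduced to Proposition \ref{surj} via the canonical surjection onto $G$ (first-coordinate projection for (i) and (ii), and for (iii) the map sending each $N$-letter to $e$, which is exactly the morphism induced by the universal property you invoke). The only cosmetic difference is that the paper writes out the free-product morphism explicitly as $\phi(g_1n_1\cdots g_kn_k)=g_1\cdots g_k$ rather than citing the universal property.
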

\begin{proof}
(i) and (ii) both follow from Proposition \ref{surj} and the fact that either projection $p: G\times N \to G$ or $p: G\ltimes N \to G$ is a surjective morphism. Recall that the multiplication in the semi-direct product  $G\ltimes N $ is given by $(g_1, n_1) \cdot (g_2, n_2):=(g_1g_2,  n_1\alpha_{g_1}(n_2) )$, hence projecting onto $G$ gives a group morphism. For (iii) we again apply Proposition \ref{surj} with $\phi: G\star N\to G$ defined by $\phi(g_1n_1 g_2 n_2\dots g_kn_k) = g_1g_2\dots g_k$ for all $k\in\mathbb{N}$ and $g_i\in G$ and $n_i\in N$, $i=1,\dots,k$.  
\end{proof}

\begin{corollary}
Framenability does not pass to (countable) subgroups. Also, there exist framenable groups that do not have Haagerup's property. 
\end{corollary}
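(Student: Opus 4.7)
The plan is to produce a single example that witnesses both conclusions. Let $H$ be any infinite countable group with Kazhdan's property (T)---for concreteness take $H=SL(3,\bz)$---and set $G:=\bz\times H$. Since $\bz$ is abelian and hence amenable, Theorem \ref{amen} gives that $\bz$ is framenable, and then Proposition \ref{products}(i) upgrades this to framenability of the full product $G$.

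For the inheritance failure, note that $\{0\}\times H\cong H$ is a countable subgroup of $G$. By construction $H$ has property (T), so Proposition \ref{nonT} immediately prevents $H$ from being framenable. Thus $G$ is framenable but contains a countable subgroup that is not, establishing the first assertion.

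For the second claim, the plan is to invoke the remark preceding Proposition \ref{nonT}: it suffices to exhibit a countable subgroup $N\le G$ for which $(N,G)$ has relative property (T). Take $N:=H$. Given any unitary representation $\pi$ of $G$ with $1_G\prec\pi$, any net of almost $G$-invariant unit vectors is automatically almost $H$-invariant, so $1_H\prec\pi|_H$. Property (T) of $H$ then supplies a nonzero $H$-invariant vector in the representation space of $\pi$, which is exactly what relative property (T) of the pair $(H,G)$ requires. The cited remark now forces $G$ to fail Haagerup's property, giving the second assertion.

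No genuine obstacle is encountered: the proof is a direct assembly of Proposition \ref{products}(i), Proposition \ref{nonT}, and the remark preceding it, using only the classical Kazhdan theorem as external input to supply an infinite countable property (T) group. One could equally well take $H$ to be any other infinite Kazhdan group; the argument is insensitive to this choice.
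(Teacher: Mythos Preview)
Your proposal is correct and follows essentially the same construction as the paper: form the product of a framenable group (you take $\bz$, the paper takes an arbitrary framenable $G$) with a countable property~(T) group, then invoke Proposition~\ref{products}(i) and Proposition~\ref{nonT}. For the Haagerup failure you route through relative property~(T) of the pair $(H,G)$ and the remark preceding Proposition~\ref{nonT}, whereas the paper appeals directly to the inheritance of Haagerup's property by subgroups; these are equivalent one-line arguments.
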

\begin{proof}
Take any framenable group $G$ and any countable, property (T) group $N$. Then $G\times N$ is framenable, but its subgroup $N$ is not, by Proposition \ref{nonT}.  Also, $G\times N$ cannot satisfy Haagerup property because $N$ would too, by inheritance.  
\end{proof}

\begin{definition} A group $G$ is called {\it locally indicable} if for any finitely generated subgroup $N\leq G$ there exists a surjective morphism $\varphi : N\to \mathbb{Z}$. 
\end{definition}
There are many examples of locally indicable groups. All torsion free, one relator groups are locally indicable \cite{ClRo}. Also, Thurston's Stability Theorem asserts that for the unit interval $I$ every  subgroup of $\textup{Diff}_{+}^1(I)$ is locally indicable.

The following corollary is a direct consequence of Proposition \ref{surj}. Recall, that the {\it abelianization} of a group $G$ is the quotient group $G/[G,G]$ where $[G, G]$ is the {\it commutator subgroup} generated by all commutators $[x,y]:=xyx^{-1}y^{-1}$, $x,y\in G $. 

\begin{corollary}\label{abl} (i) If a group $G$ has an infinite abelianization then $G$ is framenable.  \\
(ii) If $G$ is finitely generated and locally indicable then $G$ is framenable.
\end{corollary}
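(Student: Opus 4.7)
The plan is to reduce both parts to Proposition \ref{surj} by producing, in each case, a surjective morphism from $G$ onto a group that is already known to be framenable. The only framenable target we need is an infinite countable abelian group, whose framenability we can harvest from Theorem \ref{amen} (via amenability) and Example \ref{abg}.

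For part (i), I would first observe that the abelianization $A := G/[G,G]$ is by hypothesis an infinite abelian group. Since $G$ is countable in the convention of the paper, so is $A$; hence $A$ is a countable infinite abelian group, therefore amenable (as recorded in Example \ref{abg}), and in particular framenable by Theorem \ref{amen}. The canonical quotient $\varphi \colon G \to A$ is surjective, so Proposition \ref{surj} immediately yields that $G$ is framenable.

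For part (ii), the key observation is that when $G$ is itself finitely generated, the subgroup $N$ in the definition of local indicability may be taken to be $G$ itself. This produces a surjective morphism $\varphi \colon G \to \mathbb{Z}$. Since $\mathbb{Z}$ is a countable infinite abelian group, it is amenable, hence framenable, and another application of Proposition \ref{surj} gives the framenability of $G$.

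There is essentially no genuine obstacle in either part: the corollary is meant as a packaging of Proposition \ref{surj} with the amenability of countable abelian groups. The only delicate point is purely bookkeeping: in (i) one must confirm that ``infinite abelianization'' really supplies a \emph{countable infinite} abelian quotient so that the framework of Theorem \ref{amen} (which is stated for countable discrete groups) applies, and in (ii) one must recall that local indicability is a condition on \emph{all} finitely generated subgroups, so it covers $G$ itself when $G$ is finitely generated. Both are immediate from the definitions.
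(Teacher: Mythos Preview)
Your proposal is correct and matches the paper's approach exactly: the paper states that the corollary is ``a direct consequence of Proposition \ref{surj}'' without spelling out details, and your argument is precisely the intended unpacking---quotient onto the infinite abelianization in (i), and take $N=G$ in the definition of local indicability in (ii), then invoke amenability of countable abelian groups via Theorem \ref{amen} and apply Proposition \ref{surj}.
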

It would be interesting to extend (ii) above to the case of infinitely many generators, see the problem list in the next section. Although not hereditary, framenability enlarges the (countable) amenable class towards the non property (T) class. Because we do not know whether this enlargement precisely hits non property (T) (we conjecture that it does not), it is instructive to find some permanence properties which in turn produce more examples. One of the main results in this section, Theorem \ref{ind}, shows that, in certain cases, framenability passes from a subgroup to the group. For this we need {\it induced representations}, i.e., the main tool used to obtain a group representation out of a subgroup's. The interested reader may consult \cite{Folland} or \cite{BHV} on the subject of induced representations. For convenience  we include an equivalent construction, plus some properties of the induced representation when the groups are countable.

Assume first that $\Gamma$ and $\Lambda$ are two countable groups and that $\Gamma$ admits a measure preserving action on a measurable space $(X,\nu)$. We denote this action by  $\Gamma \curvearrowright (X,\nu)$, with automorphisms $X\ni x\mapsto x\cdot \gamma =\gamma(x)\in X$ satisfying $x\cdot \gamma_1\gamma_2= (x\cdot \gamma_1)\cdot \gamma_2 $, $\forall$ $\gamma_{1,2}\in\Gamma$, $x\in X$. We further assume that we have a {\it cocycle}
$\alpha: X\times \Gamma \to\Lambda$
 $$\alpha(x, \gamma_1\gamma_2)=\alpha(x, \gamma_1) \cdot \alpha(x\cdot \gamma_1, \gamma_2), \mbox{ for all }\gamma_{1},\gamma_2\in\Gamma \text{ and a.e. }x\in X.$$
Now, if $\pi : \Lambda\to\mathcal U(H)$ is a unitary representation of $\Lambda$ then 
$$\mathcal{H} := L^2(X, H) =\{ \varphi: X \to H\mid \int_X \| \varphi(x) \|^2 d\nu < \infty  \}$$ is a Hilbert space and 
$$\rho: \Gamma\to {U}(\mathcal{H}), \quad \rho_{\gamma}(\varphi) (x): =\pi[  \alpha(x,\gamma ) ] (\varphi( x\cdot\gamma)   ) $$ is the {\it induced representation}  $\textup{Ind}_{\Lambda}^{\Gamma}\pi$ with respect to $\alpha$. It is the cocycle identity which is needed to show that $\rho$ is a group representation, i.e., $\rho_{hg}=\rho_h \rho_g$, for $g, h \in \Gamma$. 
Our extension theorem will be based on the set-up and properties described in the example below.

\begin{example}\label{cocy}
Let $N\leq G$ be a countable subgroup of countable $G$ with $\pi: N\to\mathcal U(H)$ a unitary representation of the subgroup $N$. Denote by 
$X=G/N$ the set of right cosets, and $D=\{e, g_1, g_2,\dots \}$ a set of representatives. By $r: G/N\to D$ we denote the map that associates to a  right coset $x$ its unique representative in $D$. The action of $G$ on $X$ (with respect to the counting measure) is given by: for 
$g\in G$, $X\ni x=Nh\mapsto (Nh)g=N(hg)\in X $.  Now, for any $g\in G$ and $x\in G/N$ there exists a unique element $\alpha(x,g)$ in $N$ such that 
$\alpha (x,g)=r(x)\cdot g\cdot r(x g)^{-1}$. To see this, notice that for $x\in G/N$, $r(x)$ is the unique element of $D$ such that $x=N r(x)$. Then, for $g\in G$ the following coset equality holds: $x g=Nr(x) g$. Also $xg= N r(xg)$, thus we infer $Nr(x) g= Nr(xg)$, and therefore $r(x) g r^{-1}(xg) \in N$ and is unique, given $x$ and $g$. 
Hence the map $\alpha : G/N\times G\to N$ is well-defined. It also satisfies the cocycle identity ( item (ii) in the Lemma below).  
If $\pi: N\to\mathcal U(H)$ is a unitary representation then $\textup{Ind}_N^G \pi : G\to \mathcal{U}(l^2(G/N, H))$ is a representation of $G$, where the Hilbert space of the induced representation is  $$l^2(G/N, H)=\{  \psi: D\to H \text{ }|\text{ } \sum_{g\in D} \|\psi(g)  \|^2 < \infty  \}$$ with inner product $\ip{\phi}{\psi} = \sum_{g\in D} \ip{ \phi(g)}{\psi(g) }_H $.
\end{example}
We will need the following properties of the cocycle $\alpha$ defined in the example above.
\begin{lemma}\label{coc} In the setting of Example \ref{cocy} we have
	\begin{enumerate}
\item $\alpha(N,n)=n$, for all $n\in N$.
\item $\alpha(x, g_1g_2)=\alpha(x, g_1)\alpha(xg_1,g_2)$, for all $x\in G/N$, $g_{1}, g_2\in G$.
\item $\alpha(N, n g)=n \alpha(N, g)$, for all $n\in N$, $g\in G$. 
\end{enumerate}
\end{lemma}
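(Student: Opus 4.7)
The plan is to prove all three items by direct computation using the defining formula $\alpha(x,g)=r(x)\cdot g\cdot r(xg)^{-1}$ from Example \ref{cocy}, together with two simple observations: first, since $e\in D$ represents the coset $N$, we have $r(N)=e$; second, for any $n\in N$, the right coset $Nn$ equals $N$ (because $N$ is a subgroup and the action is right multiplication), so $Nng=Ng$ for all $g\in G$.

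For (i), I would apply the formula with $x=N$ and $g=n\in N$: $\alpha(N,n)=r(N)\cdot n\cdot r(Nn)^{-1}=e\cdot n\cdot e^{-1}=n$, using $r(N)=e$ and $Nn=N$. For (iii), the same observations give $r(Nng)=r(Ng)$, so $\alpha(N,ng)=e\cdot ng\cdot r(Ng)^{-1}=ng\cdot r(Ng)^{-1}$, while $n\alpha(N,g)=n\cdot e\cdot g\cdot r(Ng)^{-1}=ng\cdot r(Ng)^{-1}$, matching.

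For (ii), the computation is a telescoping cancellation in the middle:
\begin{equation*}
\alpha(x,g_1)\alpha(xg_1,g_2)=\bigl(r(x)\,g_1\,r(xg_1)^{-1}\bigr)\bigl(r(xg_1)\,g_2\,r(xg_1g_2)^{-1}\bigr)=r(x)\,g_1g_2\,r(xg_1g_2)^{-1},
\end{equation*}
which is exactly $\alpha(x,g_1g_2)$. The only thing to verify is that each factor genuinely lies in $N$, and this is precisely what was established in Example \ref{cocy} (namely $r(x)\,g\,r(xg)^{-1}\in N$ since both sides of $Nr(x)g=Nr(xg)$ are equal cosets).

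No step presents a real obstacle; the entire lemma is a bookkeeping exercise confirming that the natural cocycle built from a choice of coset representatives satisfies the cocycle identity and the expected normalizations at the identity coset. The only point where a reader might pause is keeping track of whether the action is on the left or on the right, but since both the action on $G/N$ and the product $Nn$ are written on the right and $n\in N$, $Nn=N$ holds verbatim, so no inversion is needed.
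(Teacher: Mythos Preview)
Your proof is correct and follows essentially the same approach as the paper: both rely on the defining formula $\alpha(x,g)=r(x)\,g\,r(xg)^{-1}$ together with $r(N)=e$ and $Nn=N$. The only minor difference is that for (iii) the paper deduces the identity from (ii) and (i) via $\alpha(N,ng)=\alpha(N,n)\alpha(Nn,g)=n\,\alpha(N,g)$, whereas you compute both sides directly; and for (ii) the paper starts from $\alpha(x,g_1g_2)$ and substitutes, while you start from the product and telescope---these are the same computation run in opposite directions.
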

\begin{proof}
(i) follows from the definition of the map $r$ above and $r(N)=e$.

(ii) we have $\alpha(x, g_1g_2) = r(x) g_1g_2 r^{-1}(x g_1g_2)$.  Next, substitute $r^{-1}(x g_1g_2)$ in the righthand side with $g_2^{-1}r^{-1}(xg_1)\alpha(xg_1, g_2)$ to continue with $$\alpha(x, g_1g_2)=r(x)g_1r^{-1}(xg_1)\alpha(xg_1, g_2)=\alpha(x,g_1)\alpha(xg_1, g_2).$$ 

(iii) follows from the cocycle identity, then (i) and $Nn=N$ for $n\in N$. 
\end{proof}

\begin{theorem}\label{ind} Let $N$ be a subgroup of a countable group $G$ such that  $N$ has finite index in $G$. If $N$ is framenable then $G$ is framenable. 
\end{theorem}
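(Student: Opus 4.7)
My plan is to induce framenability from $N$ to $G$ using the induced representation $\rho=\textup{Ind}_N^G\pi$ constructed in Example \ref{cocy}. Set $k=[G:N]<\infty$, let $D=\{e=d_0,d_1,\dots,d_{k-1}\}$ be the coset representatives, so $\mathcal H=l^2(G/N,H)\cong\bigoplus_{d\in D}H_d$ is a \emph{finite} direct sum of copies of $H$. Let $\pi$, $S\subset N$ countable, and $w\in H$ be the framenability data for $N$.

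For $1_G\prec\rho$, I take a net $(v_i)$ of almost invariant vectors for $\pi$ and set $\tilde v_i(x)=\frac1{\sqrt k}v_i$ for every coset $x\in G/N$. These are unit vectors in $\mathcal H$, and a direct computation from the definition of $\rho$ gives
\[
\|\rho_g\tilde v_i-\tilde v_i\|^2=\frac1k\sum_{x\in G/N}\|\pi[\alpha(x,g)]v_i-v_i\|^2.
\]
Since $G/N$ is finite and $\alpha(x,g)\in N$ is a fixed group element for each pair $(x,g)$, each summand tends to zero and hence so does the whole sum. Note that the finite-index assumption is essential here; in the infinite-index case the constant vector would not even belong to $l^2(G/N,H)$.

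For the weak frame vector, I concentrate $\tilde w$ on the identity coset: $\tilde w(N)=w$ and $\tilde w(y)=0$ for $y\neq N$. The formula $(\rho_g\tilde w)(x)=\pi[\alpha(x,g)]\tilde w(xg)$ shows that $\rho_g\tilde w$ is supported on the single coset $Ng^{-1}$. Parametrising $g=d^{-1}n$ with $d\in D$, $n\in N$, the support becomes $Nd$, and using $r(Nd)=d$, $r(N)=e$, together with Lemma \ref{coc}(i), a short cocycle computation yields
\[
\alpha(Nd,d^{-1}n)=d\cdot d^{-1}n\cdot r(N)^{-1}=n,
\]
so $(\rho_{d^{-1}n}\tilde w)(Nd)=\pi(n)w$. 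Declaring $T=\{d^{-1}n:d\in D,\;n\in S\}\subset G$, which is a countable set, the family $\{\rho_g\tilde w\}_{g\in T}$ is block-diagonal with respect to the decomposition $\mathcal H=\bigoplus_d H_d$ and reproduces the original weak frame $\{\pi(n)w\}_{n\in S}$ on each summand $H_d$.

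The weak frame property for $\rho$ is then immediate from this decomposition: for $v=(v_d)_{d\in D}\in\mathcal H$,
\[
\sum_{g\in T}|\langle v,\rho_g\tilde w\rangle|^2=\sum_{d\in D}\sum_{n\in S}|\langle v_d,\pi(n)w\rangle|^2,
\]
so Bessel-boundedness is inherited with the same constant $B$, and totality of $\{\pi(n)w\}_{n\in S}$ in each $H_d$ forces $v=0$ whenever the full sum vanishes. Combined with $1_G\prec\rho$, this gives framenability of $G$. The main delicate step I anticipate is the cocycle bookkeeping required to identify the support and value of $\rho_g\tilde w$ precisely; once that computation is in hand, the rest is a formal consequence of the finite-index structure.
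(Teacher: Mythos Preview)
Your proof is correct and follows essentially the same approach as the paper: the same induced representation, the same weak frame vector $\tilde w$ concentrated at the identity coset, and the same indexing set (the paper's $S'=P^{-1}$ with $P=\sqcup_i S^{-1}g_i$ coincides with your $T=\{d^{-1}n:d\in D,\,n\in S\}$). The one notable difference is that the paper obtains $1_G\prec\rho$ by invoking a general result on induction from finite-index subgroups (Theorem~E.3.1 in \cite{BHV}), whereas you give the explicit averaged vectors $\tilde v_i=\frac{1}{\sqrt{k}}v_i$ and compute directly; your argument is self-contained and makes transparent exactly where the finite-index hypothesis is used.
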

\begin{proof} Notice that $N$ must be countable. Let $D=\{e, g_1, \dots , g_k\}$ be a set of right coset representatives, and assume $N$ is framenable. Let $\pi:N\to\mathcal U(H)$ be a unitary representation with $1_N\prec \pi$,  a countable set $S\subset N$, and a vector $w\in H$ such that $\{\pi_n(w)\}_{n\in S}$ is a weak frame in $H$. We consider the induced representation $\rho:= \textup{Ind}_N^G(\pi)$ corresponding to the cocycle $\alpha$ from Example \ref{cocy}. By the properties of induced representations, because $N$ has finite index in $G$ (see Theorem E.3.1 in \cite{BHV}), from $1_N\prec \pi$ we obtain $1_G\prec\rho$. Thus, it remains to find a weak frame vector $\phi\in l^2(G/N, H)$ with weak frame $\{ \rho_g(\phi) \}_{g\in S'}$, for a countable subset $S'\subset G$. Define $\phi: G/N \to H$ by $\phi(N)=w$, and $\phi(x)=0$, for all $x\neq N$. Notice $\|\phi\|_{l^2}=\|w\|_{H} $ with respect to corresponding Hilbert space norms. Let $P:=\sqcup_{i=1}^k S^{-1}g_i$. Then $P\subset G$ is countable and is a finite union of disjoint subsets of $G$ because $S^{-1}\subset N$ and $\{Ng_i\}_{i=1}^k$ are mutually disjoint.  For the countable set $S':=P^{-1}\subset G$ we verify that $\{\rho_g(\phi)\}_{g\in P^{-1}}$ is a weak frame. We have for all $0\neq \varphi\in l^2(G/N, H)$ :
\begin{align}\label{weakf}
&\sum_{g\in P^{-1}} |\ip{\rho_g\phi }{ \varphi}_{l^2}|^2  = \sum_{g\in P^{-1}} \left|  \sum_{x} \ip{ \phi(x) }{\rho_{g^{-1}} \varphi(x)}_{H} \right|^2  = (\text{only }\phi(N)\neq 0 )     \\
&\sum_{g\in P}|\ip{w}{  \rho_g\varphi(N) }_{H} |^2    =   \sum_{i=1}^k \sum_{g\in S^{-1} g_i} |  \ip{ w }{ \pi[\alpha(N,g ) ] (\varphi (Ng))  }_{H} |^2=    \nonumber \\
&\sum_{i=1}^k \sum_{n\in S}  |\ip{ w }{ \pi[\alpha(N, n^{-1}g_i ) ] (\varphi (Nn^{-1}g_i))  }_{H} |^2 = (\text{by (iii) in Lemma \ref{coc} and } Nn^{-1}=N) \nonumber \\
&\sum_{i=1}^k \sum_{n\in S}  |\ip{ w }{ \pi[n^{-1}\alpha(N,g_i) ] (\varphi (Ng_i))  }_{H} |^2=\sum_{i=1}^k  \sum_{n\in S} | \ip{ \pi_n w }{ \pi[\alpha(N,g_i) ] (\varphi (Ng_i))  }_{H} |^2  \nonumber
 \end{align}
 Rewriting $v_i := \pi[\alpha(N,g_i) ] (\varphi (Ng_i)) \in H$ we see that the latter double sum satisfies
 \[
0<\sum_{i=1}^k  \sum_{n\in S} | \ip{ \pi_n w }{ v_i }_H |^2 <\infty
 \]
 Indeed, we add $k$ sums over $S$, each of which must be finite because $\{\pi_nw\}_{n\in S}$ is a weak frame thus $\sum_{n\in S} | \ip{ \pi_n w }{ v_i } |^2 <\infty$. 
The first inequality is satisfied, else the weak frame property would imply all $v_i=0$, and because $\pi[\alpha(N,g_i) ] $ is unitary we would get all $\varphi(Ng_i)=0$, thus $\varphi=0$. 
 In conclusion $\{\rho_g(\phi)\}_{g\in P^{-1}}$ is a weak frame, and it follows that $G$ is framenable. 
\end{proof}
It would be interesting to show the converse of the theorem above. Then non framenability would  share a similar result with property (T): if $N$ has finite index in $G$ then $N$ has property (T) if and only if $G$ has property (T) (see e.g.\cite{BHV}). Nevertheless, the theorem allows us to extend the framenable class considerably.  
\begin{corollary}\label{lat} If $\Lambda \subset SL(2,\mathbb{R})$ is a countable lattice (i.e., finite co-volume subgroup) then $\Lambda$ is framenable.
\end{corollary}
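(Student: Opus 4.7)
The plan is to exhibit a finite-index subgroup of $\Lambda$ that is framenable, and then invoke Theorem \ref{ind}. Since $\Lambda \subset SL(2,\mathbb{R})$ is a finitely generated linear group, I would first apply Selberg's lemma to find a torsion-free subgroup $\Lambda_0 \leq \Lambda$ of finite index. Being torsion-free, $\Lambda_0$ contains neither $-I$ nor any nontrivial element of finite order, so it projects injectively into $PSL(2,\mathbb{R})$ and acts freely and properly discontinuously on the upper half-plane $\mathbb{H}^2$. Consequently $\Lambda_0 \cong \pi_1(\mathbb{H}^2/\Lambda_0)$, where $\mathbb{H}^2/\Lambda_0$ is a Riemann surface of finite hyperbolic area.

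The classification of such surfaces then gives me only two possibilities for $\Lambda_0$. If the quotient is compact, $\mathbb{H}^2/\Lambda_0$ is a closed surface $\Sigma_g$ of genus $g\geq 2$, hence $\Lambda_0\cong\pi_1(\Sigma_g)$ has abelianization $H_1(\Sigma_g;\bz)\cong\bz^{2g}$. If the quotient is non-compact, it deformation retracts onto a finite graph, so $\Lambda_0$ is a free group $F_n$ (with $n\geq 2$, since the Euler characteristic is negative) and its abelianization is $\bz^n$. In either case, the abelianization of $\Lambda_0$ is infinite, so Corollary \ref{abl}(i) gives that $\Lambda_0$ is framenable.

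Finally, since $\Lambda_0$ has finite index in $\Lambda$, Theorem \ref{ind} immediately yields that $\Lambda$ itself is framenable.

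There is no real obstacle here beyond marshalling the right structural facts: Selberg's lemma to produce a torsion-free subgroup of finite index, and the standard classification of torsion-free lattices in $PSL(2,\mathbb{R})$ as surface groups (cocompact case) or free groups (non-cocompact case). Both permanence tools needed, namely the passage from infinite abelianization to framenability and the induction from finite-index subgroups, have already been established in Section \ref{wfr}, so once these classical inputs are cited, the argument reduces to a brief chain of implications.
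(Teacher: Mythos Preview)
Your proof is correct and follows essentially the same strategy as the paper: find a finite-index subgroup that surjects onto $\bz$ (equivalently, has infinite abelianization), apply Proposition~\ref{surj}/Corollary~\ref{abl}(i) to get framenability of the subgroup, then use Theorem~\ref{ind}. The only difference is in the packaging of the structural input: the paper simply cites the fact that a Fuchsian group admits a finite-index subgroup mapping onto $\bz$ (referring to \cite{Kat}), whereas you unpack this explicitly via Selberg's lemma and the classification of torsion-free finite-covolume Fuchsian groups as surface groups or free groups. Your version is more self-contained; the paper's is terser but relies on the same classical facts under the hood.
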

\begin{proof} Any such $\Lambda$ is a Fuchsian group (e.g. see \cite{Kat}) and thus it admits a finite index subgroup $N$ with a surjective morphism $\varphi : N\to\mathbb{Z}$. By Proposition \ref{surj}, $N$ is framenable, and by Theorem \ref{ind}, the lattice $\Lambda$ is as well.  
\end{proof}

The next result is interesting because it shows that one can extend frame-like vectors from a subgroup representation to a representation of the parent group. 
\begin{theorem}\label{framext} Let $N$ be countable subgroup of a countable group $G$ and $\alpha: G/N\times G\to N$ be any cocycle such that the restriction $\alpha(N, \cdot)_{ \text{  } | \text {  }N}: N\to N$ is bijective. 
If $\pi: N\to\mathcal U(H)$ has a Parseval frame/frame/weak frame vector then so does $ \textup{Ind}_N^G \pi : G \to \mathcal{U}(l^2(G/N, H))$.  
\end{theorem}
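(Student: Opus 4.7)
The plan is to use the vector $\phi\in l^2(G/N,H)$ defined by $\phi(N)=w$ and $\phi(x)=0$ for $x\neq N$, the same vector used in the proof of Theorem~\ref{ind}, and to show that when $w$ is a Parseval/frame/weak frame vector for $\pi$, then $\phi$ is the corresponding type of vector for $\rho:=\textup{Ind}_N^G\pi$. Note that $\|\phi\|_{l^2(G/N,H)}=\|w\|_H$.

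For arbitrary $\varphi\in l^2(G/N,H)$ and $g\in G$,
$$\langle\rho_g\phi,\varphi\rangle=\sum_{x\in G/N}\langle\pi[\alpha(x,g)]\phi(xg),\varphi(x)\rangle_H,$$
and the only coset contributing is $x=Ng^{-1}$, since otherwise $xg\neq N$ and $\phi(xg)=0$. Next I parametrize $G$ bijectively as $G=\bigsqcup_{h\in D}h^{-1}N$: each $g\in G$ is written uniquely as $g=h^{-1}n$ where $h\in D$ is the representative of the coset $Ng^{-1}$ and $n\in N$. The cocycle identity then splits
$$\alpha(Nh,h^{-1}n)=\alpha(Nh,h^{-1})\cdot\alpha(Nh\cdot h^{-1},n)=m_h\cdot\beta(n),$$
where $m_h:=\alpha(Nh,h^{-1})\in N$ and $\beta:=\alpha(N,\cdot)|_N:N\to N$ is bijective by hypothesis. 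Setting $u_h:=\pi(m_h^{-1})\varphi(Nh)\in H$ and re-indexing by $n':=\beta(n)$, which ranges over all of $N$, I obtain
$$\sum_{g\in G}|\langle\rho_g\phi,\varphi\rangle|^2=\sum_{h\in D}\sum_{n'\in N}|\langle\pi_{n'}w,u_h\rangle|^2.$$

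From here all three statements follow at once, since $\sum_{h\in D}\|u_h\|^2=\sum_{h\in D}\|\varphi(Nh)\|^2=\|\varphi\|^2_{l^2(G/N,H)}$ by unitarity of $\pi(m_h^{-1})$. If $w$ is a Parseval frame vector, the inner sum equals $\|u_h\|^2$ and the total equals $\|\varphi\|^2$. If $w$ is a frame vector with bounds $A,B$, the total lies in $[A\|\varphi\|^2,B\|\varphi\|^2]$. If $w$ is merely a weak frame vector, the Bessel bound yields the upper estimate $B\|\varphi\|^2$; and if the total vanishes then each inner sum is zero, so totality of $\{\pi_nw\}_{n\in N}$ in $H$ forces $u_h=0$, hence $\varphi(Nh)=0$ for all $h$, so $\varphi=0$, establishing that $\{\rho_g\phi\}_{g\in G}$ is total in $l^2(G/N,H)$.

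The main obstacle---more bookkeeping than substantial---is the careful use of the cocycle identity to isolate the factor $\alpha(N,n)$ from $\alpha(Nh,h^{-1}n)$, since that is the single place where the bijectivity hypothesis on $\beta$ is invoked. Once that separation is made, unitarity of $\pi(m_h)$ absorbs the $m_h$ factor into $u_h$, and the frame-type inequalities for $w$ in $H$ transfer coset-by-coset to the corresponding inequalities for $\phi$ in $l^2(G/N,H)$.
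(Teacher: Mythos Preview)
Your proof is correct and follows essentially the same approach as the paper: define $\phi$ supported at the base coset $N$ with $\phi(N)=w$, split the sum over $G$ coset-by-coset, use the cocycle identity to separate a factor $\alpha(N,n)$, absorb the remaining unitary into $u_h$ (which coincides with the paper's $v_i$, since $\alpha(Nh,h^{-1})^{-1}=\alpha(N,h)$), and invoke the bijectivity hypothesis to re-index over $N$. The only difference is bookkeeping---you evaluate $\langle\rho_g\phi,\varphi\rangle$ directly at the unique surviving coset $x=Ng^{-1}$, whereas the paper first passes to $\langle\phi,\rho_{g^{-1}}\varphi\rangle$ and then follows the computation \eqref{weakf}; both routes yield the same key identity $\sum_{g\in G}|\langle\rho_g\phi,\varphi\rangle|^2=\sum_{h\in D}\sum_{n\in N}|\langle\pi_n w,u_h\rangle|^2$.
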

\begin{proof} We need not assume $\alpha$ is the cocycle from Example \ref{cocy}, thus Lemma \ref{coc} is not available entirely: (i) is not available but we only need its weaker version, namely that $\alpha(N,\cdot): N\to N$ is bijective. (ii) is unchanged as we do consider a cocycle. Also, (iii) becomes
 $\alpha(N, ng_i)=\alpha(N, n)\alpha(N, g_i)$, for all $n\in N$. 
 Let $D=\{g_k\text{ }|\text{ } k\in\mathbb{N} \}$ be a set of right coset representatives, and $w\in H$ a fixed vector. Then for $\phi\in l^2(G/N, H)$ as in the proof of Theorem \ref{ind}, we have $\|w\|^2= \|\phi\|^2$. 
The calculations from \eqref{weakf}  go through similarly with $S=S^{-1}=N$ and $P=P^{-1}=G$. Because $\alpha(N, \cdot): N\to N$ is bijective, $\alpha(N, n)$ permutes the summation over $n\in N$ and hence the sums do not change. Keeping all calculation and notation similar as in the proof above, in the end we obtain: 
\[
 \sum_{g\in G} |\ip{\rho_g\phi }{ \varphi}_{l^2}|^2 =\sum_{i=1}^{\infty}\sum_{n\in N}|\ip{w}{\pi(\alpha(N,n^{-1}))v_i}_H|^2 =\sum_{i=1}^{\infty}  \sum_{n\in N} | \ip{ \pi_n w }{ v_i }_H |^2
\]
where $v_i$ are as in the proof above, hence 
$\|\varphi\|^2=\sum_{i=1}^{\infty} \| v_i\|^2$.
Now, the last identity shows that if $w$ is a Parseval frame vector or frame vector with bounds $A$ and $B$ or a weak frame vector in $H$ with respect to $\pi$ then $\phi$ is precisely the same in $l^2(G/N, H)$ with respect to $\textup{Ind}_N^G\pi$. 
\end{proof}

\section{Examples}\label{ex}

\begin{example}
The free group $\mathbb{F}_n$, $n\geq 1$ has infinite abelianization, hence by Corollary \ref{abl} (ii)  it is framenable. 
\end{example}

\begin{example}
$SL_2(\mathbb{Z})$  is framenable  being a countable lattice of $SL_2(\mathbb{R})$ hence Corollary \ref{lat} applies. For $n\geq 3$, $SL_n(\mathbb{Z})$ has property (T) (see e.g. \cite{BHV}), hence by Proposition \ref{nonT} it cannot be framenable. The group $SL_2(\mathbb{Z})\ltimes \mathbb{Z}^2$ is framenable by Proposition \ref{products}. Let us point out that the pair $(\mathbb{Z}^2 , SL_2(\mathbb{Z})\ltimes \mathbb{Z}^2) $ satisfies relative property (T) (see \cite{Sh}).   
\end{example}

The reader might have noticed that in item (iii) of Proposition \ref{products} we needed at least one of the groups be countable. The case where both groups are finite could not have been dealt with because the (surjective) morphism considered in that proof has now a finite image. Thanks to Theorem \ref{ind} we can now cover this case as well.
\begin{corollary}\label{ff} If $G$ and $N$ are two finite groups then the free (nonabelian) product group $G\star N$ is framenable. 
\end{corollary}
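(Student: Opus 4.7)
The plan is to exhibit a free normal subgroup $K$ of finite index in $G\star N$ and then apply Theorem \ref{ind} together with Corollary \ref{abl}(i). The statement is non-vacuous only when $G\star N$ is infinite (our notion of framenability is reserved for countable groups), so I may assume $|G|,|N|\geq 2$; then, for non-identity elements $g\in G$ and $n\in N$, the element $gn$ has infinite order in $G\star N$ (its powers are reduced alternating words), so $G\star N$ is indeed countable.

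First I would consider the canonical surjective homomorphism
\[
\phi\colon G\star N\longrightarrow G\times N
\]
extending the inclusions $g\mapsto(g,e)$ and $n\mapsto(e,n)$, and set $K:=\ker\phi$. Then $[G\star N:K]=|G|\cdot|N|<\infty$. The next step is to argue that $K$ is a free group. This is a classical fact about free products of finite groups: $G\star N$ acts on its Bass--Serre tree with vertex stabilizers equal to the conjugates of $G$ and of $N$. Since $\phi|_G$ and $\phi|_N$ are injective, $K$ meets both $G$ and $N$ trivially, and by normality $K$ intersects every conjugate of $G$ and of $N$ trivially as well. Hence $K$ acts freely on the Bass--Serre tree and is therefore free. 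Being a finite-index subgroup of the finitely generated group $G\star N$, the free group $K$ has finite rank; and since $G\star N$ is infinite, so is $K$, and thus $\mathrm{rank}(K)\geq 1$.

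It follows that $K$ has infinite abelianization $\mathbb{Z}^{\mathrm{rank}(K)}$, so by Corollary \ref{abl}(i), $K$ is framenable. Applying Theorem \ref{ind} to the finite-index framenable subgroup $K\leq G\star N$ then yields that $G\star N$ is framenable, completing the proof.

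The main obstacle is the appeal to the virtual freeness of $G\star N$, i.e., the fact that $K$ is free. This is the only non-elementary input; it can be obtained either from Kurosh's subgroup theorem or from the Bass--Serre argument sketched above. A more hands-on alternative would be to produce explicit free generators of $K$ via the Reidemeister--Schreier rewriting process, but invoking the classical virtually free structure of free products of finite groups seems cleanest.
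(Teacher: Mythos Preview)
Your proof is correct and follows essentially the same route as the paper: both consider the surjection $\phi\colon G\star N\to G\times N$, show its kernel is a finite-index free subgroup (hence framenable), and then invoke Theorem~\ref{ind}. The only difference is cosmetic---the paper argues freeness of $\ker\phi$ via the Kurosh Subgroup Theorem while you use the Bass--Serre tree (and mention Kurosh as an alternative yourself).
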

\begin{proof} The map $\phi: G\star N \to G\times N$, $\phi(g_1n_1 g_2 n_2\dots g_kn_k) = (g_1g_2\dots g_k, n_1n_2\dots n_k)$ is a surjective group morphism, thus $\textup{Ker}\phi$ is a finite index subgroup. By the Kurosh Subgroup Theorem (see e.g. \cite{LS}), $\textup{Ker}\phi$ is  of the form $F(X)\star (\star_i a_i G_1  a_i^{-1})\star(\star_j  b_jN_1b_j^{-1})$, where $F(X)$ is a free group with  generators the set $X$, $G_1 \leq  G$, $ N_1 \leq N$, and $a_i$, $b_i$ in $G\star N$. We argue that both subgroups $G_1$ and $N_1$ are trivial therefore $\textup{Ker}\phi=F(X)$ is a free group, hence framenable (then Theorem \ref{ind} implies that $G\star N $ is framenable). The argument is the same, so we show $N_1=\{e\}$. Because $b_j N_1 b_j^{-1}\leq \textup{Ker} \phi$ we have 
$\phi(b_j N_1 b_j^{-1})= \{(e,e)\}\subset G\times N$. We can proceed similarly for $k=1$ as for arbitrary lengths of $b_j=g_1n_1...g_kn_k\in G\star N$ with $g_i\in G$ and $n_i\in N$. Let $k=1$ and $\phi(g_1n_1  N _1 n_1^{-1}g_1^{-1})=\{(e,e)\}  $. Then $( g_1 g_1^{-1}, n )=(e,e) $, $\forall$ $n\in N_1$, which happens only if $N_1=\{e\}$.  
\end{proof}
We do not have a general result to help transfer framenability from a group to a quotient. The next example depends on the fact that we quotient out by a specific cyclic subgroup of order two.  

\begin{example} The group $PSL(2,\mathbb{Z})$ can be defined as the set of functions  $$\{\phi:\mathbb{R}\to\mathbb{R}\text{ } | \text{ } \phi(x)=\frac{ax+b}{cx+d}, a,b,c,d \in\mathbb{Z}, ad-bc=1\}$$ with respect to composition. 
It can also be viewed as $SL_2(\mathbb{Z})/\{ \pm{I} \}$ where $I$ is the identity $2\times 2$ matrix, with respect to (equivalence class of) matrix multiplication. 
We show that $SL_2(\mathbb{Z})/\{ \pm{I} \}$ admits a finite index subgroup which, under a morphism surjects onto $\mathbb{Z}$. The lattice $SL_2(\mathbb{Z})$ admits a subgroup of finite index with a surjective morphism $\varphi: N\to \mathbb{Z}$. Then $\hat{\varphi}: N/\{ \pm{I} \} \to \mathbb{Z}$, $\hat{\varphi}(g\{\pm{I}\} ):= \varphi(g)$, $g\in N$ is a well-defined, surjective morphism. The well-definedness follows from: $\varphi(-I) + \varphi(-I) =\varphi(I)=0$, hence $\varphi(-I)=0$; then for all $g\in N$, $\varphi(-g)=\varphi(-I \cdot g)=\varphi(-I)+\varphi(g)=\varphi(g)$. Because $N$ has finite index, $N/\{ \pm{I} \}$ has finite index, and is also framenable by Proposition \ref{surj}. It follows from Theorem \ref{ind} that $PSL(2,\mathbb{Z})$ is framenable.  Anotther argument can be given, based on the fact that $PSL(2,\mathbb{Z})$ is isomorphic to the free product $\mathbb{Z}_2\star\mathbb{Z}_3$, hence Corollary \ref{ff} applies. 
\end{example}

\begin{example}
For $\mathbb{F}_n$, $n\geq 2$ the free group on $n$ generators we have the following:

\begin{enumerate}
	\item $\textup{Aut}(\mathbb{F}_n)$ is framenable for $n=2$ and $n=3$. In both cases (see \cite[Corollary 1.4]{GL}),  $\textup{Aut}(\mathbb{F}_n)$ has a finite index subgroup which maps onto a free nonabelian group. Because the latter is framenable, using Proposition \ref{surj} and Theorem \ref{ind} we obtain that  $\textup{Aut}(\mathbb{F}_n)$ is framenable when $n=2$ or $n=3$. 
\item For $n\geq 5$ by remarkable results in \cite{KNO} and \cite{KKN},  $\textup{Aut}(\mathbb{F}_n)$ satifies property (T). By Proposition \ref{nonT} these groups are thus not framenable. 
\item The case  $n=4$ is open in terms of property (T) for $\textup{Aut}(\mathbb{F}_n)$. We do not know if it is (non)framenable. 
\end{enumerate}
\end{example}

\begin{example} The Baumslag-Solitar groups $BS_{p,q}=\langle a,b \text{ }|\text{ }ab^pa^{-1}=b^q \rangle$ have infinite abelianization, therefore are framenable by Corollary \ref{abl} (i).
\end{example}

\begin{example} The braid group $B_n$, $n\geq 3$ is generated by $a_1, a_2,\dots ,a_{n-1}$ subject to relations $a_{i+1}a_ia_{i+1}=a_ia_{i+1}a_i$ for $1\leq i\leq n-2$ and $a_ia_j=a_ja_i$ for $1\leq i,j \leq n-1$, $|i-j| \geq 2$. From the presentation of $B_n$ we deduce that its abelianization is the group $\mathbb{Z}$, hence by Corollary \ref{abl}(i) $B_n$ is framenable. We remark from \cite{Cherix} that for all $n\geq 3$, $B_n$ does not contain subgroups with property (T), but it is not known whether $B_n$ satisfies Haagerup's property for $n\geq 4$.  
\end{example}
For the next example we briefly mention the definitions of the three Thompson's groups. These groups are much studied, especially $F$, due to its (non)amenability question. For more properties and details, the reader may consult \cite{BrG}, \cite{BrS}, and \cite{CaFP}. 

\begin{definition} (i) Thompson's group $F$ is the set of all piecewise linear homeomorphisms from $[0,1)$ to $[0,1)$ that are differentiable except at finitely many dyadic rationals and such that on intervals of differentiability the derivatives are powers of $2$. $F$ admits the following finite presentation
$$F\cong  \langle a, b\text{ } |\text{ } [ab^{-1}, a^{-1}ba] = e, [ab^{-1},a^{-2}ba^2 ]= e] \rangle$$
where $[x,y]=xyx^{-1}y^{-1}$. Its commutator$[F,F]$, denoted by $F'$ in Thompson's group literature, is generated by $[x,y]$, $x, y$ in $F$.

(ii) Thompson's group $T$ consists of the same elements as above, possibly admitting at most one (dyadic) point of discontinuity in $[0,1)$ with discontinuity jump equal to $1$. Notice $F=\{g\in T\text{ }| \text{ }g(0)=0\}$. 
Alternatively, $T$ can be viewed as a subgroup of piecewise homeomorphisms of the circle $\mathbb{S}^1$. $T$ is generated by elements $a$, $b$, and $c$, $c^3=e$, with $a,b$ the generators of $F$, to which more relations are added, see \cite{CaFP}.

(iii) Thompson's group $V$ is the set of all piecewise linear bijections, right-continuous on $[0,1)$, which satisfy same differentiability properties as in (i). 
$V$ is generated and presented by generators $a, b, c, d$, $d^2=e$ with $a, b, c$ generators of $T$, plus more relations.  
\end{definition}
The generators $a,b,c,d$ have explicit copies as piecewise linear functions on the unit interval.  
The groups $V$, $T$ and $[F,F]$ are simple (no non trivial normal subgroup). Also $V$ (and $T$ and $F$) satisfy Haagerup's property, see \cite{Far}. The group $V$ shares a property similar to the free nonabelian group, namely it is $C^{*}$ simple, \cite{BouB}, whereas $C^{*}$ simplicity of $T$ is equivalent to the non amenability of $F$, \cite{BouB} and \cite{HaOl}. For more details on $C^{*}$ simple groups we refer the reader to \cite{deH}.     
 
\begin{example} 
The Thompson's group $F$ is framenable because it has infinite abelianization: $F/[F,F]\cong\mathbb{Z}^2$, see \cite{CaFP}. Hence, Corollary \ref{abl}(i) is applicable. 
\end{example}
\begin{remark}
It would be interesting to test diverse representations of $F$ for the possibility of either condition (ii)-(vi) from Theorem \ref{amen}.  A representation with both almost invariant vectors and  (weak) frame vector would imply amenability.  In the nonamenable direction, because $F$ is ICC, the existence of a (weak) frame representation without almost invariant vectors  would imply $F$ is nonamenable, due to Theorem \ref{thai3}.  Let us remark that representations based on the canonical action of $F$ on the unit interval or the real line may not admit weak frames of the form $\{ \pi_g(w) \}_{g\in F}$. For example, the representation (see \cite{HaOl}) $\pi: F\to\mathcal{U}( l^2(\mathcal{X} ))$, $\mathcal{X}:=\mathbb{Z}[1/2]\cap (0,1)$, $\pi_g(\delta_x):=\delta_{g(x)}$ does not admit weak frame vectors. Notice we have opened the interval $[0,1)$ at the left endpoint else $\delta_0$ is a fixed point and Remark \ref{tri} applies.  
Now, if $w\in l^2(\mathcal{X})$ with $w\neq 0$ then there exists $x_0\in\mathcal{X}$ such that $| \ip{ w }{v}  | > 0$ where $v= \delta_{x_0}$. From the definition of $F$ above we see that we can pick infinitely many $g\in F$ such that $g(x_0)=x_0$ i.e., $\pi_g(v)=v$. This entails 
$$\infty=\sum_{g\in F, g(x_0)=x_0}|\ip{w}{\pi_g(v)}|^2=\sum_{g\in F, g(x_0)=x_0}|\ip{\pi_{g^{-1}} w}{ v}|^2\leq \sum_{g\in F}|\ip{\pi_g w}{v}|^2$$
\end{remark}
\begin{remark}
The tools we have developed in the previous section seem to not capture framenability in the case of simple groups. In such cases, any non trivial surjective morphism must be an isomorphism. This situation might raise the question whether countable, simple, framenable groups exist. Remarkably, there are uncountably many (non isomorphic) infinite, simple, finitely generated, amenable groups, see \cite{JuMo}. 
We end with the following list of problems and questions that we find interesting in order to further develop the connections between group and frame theory. 
\begin{enumerate}
\item Find characterizations of framenability (e.g., in terms of group theoretic properties).
\item  Find a non property (T), non framenable group. 
\item Does there exists a countable Haagerup's property and non framenable group? If yes, then this solves ii) too. 
\item  Are the Thompson's groups $T$ or $V$ framenable? If not, then this solves iii) too.  
Note that $T\cong PPSL(2,\mathbb{Z})$ i.e., piecewise $\frac{ax+b}{cx+d}$ homeomorphisms of $\mathbb{R}\cup\{\infty\}$ with finitely   discontinuity points of the derivative, all inside $\mathbb{Q}\cup\{\infty\}$. This isomorphism is due to Thurston, for more details see for example \cite{HaOl} where other interesting amenability related questions are discussed. 
\item  Our tools do not capture the commutator $[F,F]$ as framenable. We think this should be the case. Recall that the (non)amenability question of $F$ and $[F,F]$ are equivalent, hence we expect $[F,F]$ to be in the larger framenable class. The group $F$ is locally indicable, owing to \cite{ GhS} and Thurston's Stability Theorem, hence the commutator is too (but not  finitely generated).  
\item  Is a locally indicable, infinitely generated countable group framenable? More generally, we can ask the same question of a countable group having the property that any of its finitely generated subgroups is framenable. Notice a group with the latter property cannot satisfy property (T) (it would imply finite generation, and we have seen framenable is opposed to property (T) ). 
\end{enumerate}
\end{remark}


\begin{acknowledgements} We would like to thank the anonymous referees for their valuable suggestions, which helped improve our paper. We also thank Professor Paul Jolissaint who, after this paper was accepted for publication, noted that the conclusion of Theorem \ref{ind} holds more generally when $N$ is co-amenable in $G$, rather than  merely of finite index.  
\end{acknowledgements}

\bibliographystyle{alpha}	
\bibliography{eframes}

\end{document}